
\documentclass{amsart}
\usepackage{amsmath,amsfonts,amssymb,amsthm}
\usepackage{hyperref}

\theoremstyle{plain}
\newtheorem{theorem}{Theorem}[section]
\newtheorem{proposition}{Proposition}[section]

\newtheorem{remark}{Remark}[section]

\newtheorem{corollary}{Corollary}[section]

\theoremstyle{definition}
\newtheorem{definition}{Definition}[section]

\newtheorem*{remark*}{Remark}
\newtheorem*{conjecture*}{Conjecture}

\newcommand{\A}{\mathcal{A}}
\newcommand{\B}{\mathcal{B}}
\newcommand{\C}{\mathcal{C}}
\newcommand{\D}{\mathcal{D}}
\newcommand{\dom}{\text{dom}}

%\spnewtheorem*{pullback}{Pullback Theorem}{\bfseries}{\itshape}
\newtheorem*{proposition*}{Proposition}{\bfseries}{\itshape}
\newtheorem*{problem*}{Problem}{\bfseries}{\rmfamily}

\begin{document}
%
% \title{A Note on Computable Embeddings for Ordinals and Their Reverses\thanks{The first author was supported by Mathematical Center in Akademgorodok. The second author was supported by Bulgarian National Science Fund DN 02/16/19.12.2016 and NSF grant DMS 1600625/2016}}

\title{A Note on Computable Embeddings for Ordinals and Their Reverses}
\thanks{The first author was supported by Mathematical Center in Akademgorodok. The second author was supported by Bulgarian National Science Fund DN 02/16/19.12.2016 and NSF
  grant DMS 1600625/2016. The authors also would like to thank Manat Mustafa for his hospitality during their visit of Nazarbayev University.}

\author{N.\ Bazhenov and S.\ Vatev}

% \author{Nikolay Bazhenov\inst{1,2}\orcidID{0000-0002-5834-2770} \and
%   Stefan Vatev\inst{3}\orcidID{0000-0001-5719-1467}}
% %
% \authorrunning{N. Bazhenov \and S. Vatev}
% First names are abbreviated in the running head.
% If there are more than two authors, 'et al.' is used.
% 

\address{Mathematical Center in Akademgorodok, Novosibirsk, Russia\ \and\ 
Sobolev Institute of Mathematics, Novosibirsk, Russia}
\email{bazhenov@math.nsc.ru}

\address{Department of Mathematical Logic and Applications\\
Sofia University\\
Bulgaria}
\email{stefanv@fmi.uni-sofia.bg}

% \institute{Sobolev Institute of Mathematics, Novosibirsk, Russia\ \and\ Novosibirsk State University, Novosibirsk, Russia\\ \email{bazhenov@math.nsc.ru}
%   \and
%   Faculty of Mathematics and Informatics, Sofia University, 5 James Bourchier blvd., 1164 Sofia, Bulgaria\\
%   \email{stefanv@fmi.uni-sofia.bg}}

%
\maketitle              % typeset the header of the contribution
\begin{abstract}
  
  We continue the study of computable embeddings for pairs of structures, i.e. for classes containing precisely two non-isomorphic structures.
  Surprisingly, even for some pairs of simple linear orders, computable embeddings induce a non-trivial degree structure.
  Our main result shows that although $\{\omega \cdot 2, \omega^\star \cdot 2\}$ is computably embeddable in $\{\omega^2, {(\omega^2)}^\star\}$,
  the class $\{\omega \cdot k,\omega^\star \cdot k\}$ is \emph{not} computably embeddable in $\{\omega^2, {(\omega^2)}^\star\}$ for any natural number $k \geq 3$.
\end{abstract}

\section{Introduction}

The paper studies computability-theoretic complexity for classes of countable structures. A standard method of investigating this problem is to fix a particular notion of \emph{reduction} $\leq_r$ between classes, and then to gauge the complexity of classes via the degrees induced by $\leq_r$. 

One of the first examples of such reductions comes from descriptive set theory: Friedman and Stanley~\cite{FS89} introduced the notion of \emph{Borel embedding}. Informally speaking, a \emph{Borel embedding} $\Phi$ from a class $\mathcal{K}$ into a class $\mathcal{K}'$ is a Borel measurable function, which acts as follows. Given the atomic diagram of an arbitrary structure $A\in\mathcal{K}$ as an input, $\Phi$ outputs the atomic diagram of some structure $\Phi(A)$ belonging to $\mathcal{K}'$. The key property of $\Phi$ is that $\Phi$ is injective on isomorphism types, i.e. $\A \cong \B$ if and only if $\Phi(\A) \cong \Phi(\B)$.

Calvert, Cummins, Knight, and Miller \cite{CCKM04} (see also \cite{KMV07}) developed two different \emph{effective versions} of Borel embeddings. Roughly speaking, a \emph{Turing computable embedding} (or \emph{$tc$-embedding}, for short) is a Borel embedding, which is realized by a Turing functional $\Phi$. A \emph{computable embedding} is realized by an enumeration operator. It turned out that one of these notions is strictly stronger than the other: If there is a computable embedding from $\mathcal{K}$ into $\mathcal{K}'$, then there is also a $tc$-em\-bed\-ding from $\mathcal{K}$ into $\mathcal{K}'$. The converse is not true, see Section~\ref{sect:prelim} for formal details.

A powerful tool, which helps to work with Turing computable embeddings, is provided by the Pullback Theorem of Knight, Miller, and Vanden Boom~\cite{KMV07}. Informally, this theorem says that $tc$-embeddings behave well, when working with syntactic properties: one can ``pull back'' computable infinitary sentences from the output class $\mathcal{K}'$ to the input class $\mathcal{K}$, while \emph{preserving} the complexity of these sentences.

Nevertheless, Pullback Theorem and its consequences show that sometimes $tc$-em\-bed\-dings are too coarse: they \emph{cannot see}
finer structural distinctions between classes. One of the first examples of this phenomenon was provided by Chisholm, Knight, and
Miller~\cite{CKM07}: Let $VS$ be the class of infinite $\mathbb{Q}$-vector spaces, and let $ZS$ be the class of models of the
theory $\mathrm{Th}(\mathbb{Z},S)$, where $(\mathbb{Z},S)$ is the integers with successor. Then $VS$ and $ZS$ are equivalent with
respect to $tc$-em\-bed\-dings, but there is no computable embedding from $VS$ to $ZS$.

Another example of this intriguing phenomenon can be found in the simpler setting of classes generated by pairs of linear orderings, closed under isomorphism. Recall that by $\omega$ one usually denotes the standard ordering of natural numbers. For a linear order $L$, by $L^{\star}$ we denote the reverse ordering, i.e. $a\leq_{L^{\star}} b$ iff $b\leq_{L} a$. 

Ganchev, Kalimullin and Vatev~\cite{GKV} gave one such example.
For a structure $\A$, let $\tilde\A$ be the enrichment of $\A$ with a congruence relation $\sim$ such that every
congruence class in $\tilde\A$ is infinite and $\tilde\A /_\sim \cong \A$.
Then they showed that the class $\{\omega_S,\omega^\star_S\}$ is $tc$-equivalent to the class $\{\tilde\omega_S, \tilde\omega^\star_S\}$,
whereas $\{\tilde\omega_S, \tilde\omega^\star_S\}$ is not computably embeddable into $\{\omega_S,\omega^\star_S\}$.
Here $\omega_S$ and $\omega^\star_S$ are linear orderings of type $\omega$ and $\omega^\star$, respectively, together with the
successor relation.

One can prove (see, e.g., Theorem~3.1 in \cite{BazhGanVat-arxiv}) the following: Let $L$ be a computable infinite linear order with a least, but no greatest element. Then the pair $\{ L, L^{\star}\}$ is equivalent to $\{ \omega, \omega^{\star}\}$ with respect to $tc$-embeddings. 
This result gives further evidence that, in a sense, $tc$-embeddings cannot work with finer algebraic properties: Here a $tc$-embedding $\Phi$ can only employ the existence (or non-existence) of the least and the greatest elements. If one considers, say, the pair $\{ \omega^{\omega}, (\omega^{\omega})^{\star}\}$, then our $\Phi$ is not able to ``catch'' limit points, limits of limit points, etc. Section~\ref{subsect:bacground} gives a further discussion of interesting peculiarities of the pair $\{\omega, \omega^{\star}\}$.

On the other hand, when one deals with \emph{computable embeddings}, even finite sums of $\omega$ (together with their reverse orders) already exhibit a quite complicated structure: Let $k$ and $\ell$ be non-zero natural numbers. Then there is a computable embedding from $\{ \omega\cdot k, \omega^{\star}\cdot k\}$ into $\{ \omega\cdot \ell, \omega^{\star}\cdot \ell\}$ if and only if $k$ divides $\ell$ (Theorem~5.2 of~\cite{BazhGanVat-arxiv}). 
In other words, in this particular setting the only possible computable embeddings are the simplest ones~--- by appending a fixed
number of copies of an input order together.
% For instance, we always get that:
% \begin{itemize}
% \item
%   $\omega \cdot 3$ goes to $\omega \cdot 6$, and
% \item
%   $\omega^{\star} \cdot 3$ goes to $\omega^{\star} \cdot 6$.
% \end{itemize}
We note that it is quite non-trivial to prove that all other embeddings $\Psi$ (e.g., a computable embedding from $\{ \omega \cdot 3,
\omega^{\star}\cdot 3\}$ to $\{ \omega \cdot 4, \omega^{\star}\cdot 4\}$) are not possible~--- our proofs fully employ the peculiarities inherent to
enumeration operators. These peculiarities have topological nature: indeed, one can establish the lack of continuous operators $\Psi$ (in the Scott topology).

The current paper continues the investigations of~\cite{BazhGanVat-arxiv}. We show that even adding the finite sums of $\omega^2$ (and their inverses) to the mix makes the resulting picture more combinatorially involved (compare with Theorem~5.2 mentioned above). 

% \medskip
% \hrule
% \medskip

The structure of the paper is as follows. Section~\ref{sect:prelim} contains the necessary preliminaries. In Section~\ref{sect:omega-times-2}, we give a simple computable embedding, which induces the following fact: for any non-zero $n$, there is a computable embedding from $\{\omega\cdot 2, \omega^{\star}\cdot 2\}$ into $\{ \omega^2\cdot n, (\omega^2)^{\star} \cdot n\}$ (Corollary~\ref{corol:omega-times-2}).

Sections~\ref{sect:case-3-2} and~\ref{sect:omega-times-k} together prove that for any $k\geq 3$, there is no computable embedding from $\{\omega \cdot k, \omega^{\star}\cdot k\}$ into $\{ \omega^2, (\omega^2)^{\star}\}$ (Theorem~\ref{theo:gen-case}). Note that Section~\ref{sect:case-3-2} gives a clear exposition for a simpler case $k=3$, and the other section deals with the general case. As a corollary, we obtain that $\{\omega \cdot 3, \omega^{\star}\cdot 3\}$ can be embedded into $\{ \omega^2\cdot n, (\omega^2)^{\star} \cdot n\}$ if and only if $n\geq 2$ (Corollary~\ref{corol:omega-times-3}).

Section~\ref{sect:powers} provides a first look at computable embeddings for powers of $\omega$ (and their inverses).

\section{Preliminaries}\label{sect:prelim}

We will slightly abuse the notations: both the set of natural numbers and the standard ordering of this set will be denoted by $\omega$. The precise meaning of the symbol $\omega$ will be clear from the context.
We consider only computable languages, and structures with domain contained in $\omega$. We assume that any considered class of structures
$\mathcal{K}$ is closed under isomorphism, modulo the restriction on domains. For a structure $\mathcal{S}$, $D(\mathcal{S})$ denotes the atomic
diagram of $\mathcal{S}$. We will often identify a structure and its atomic diagram.
We refer to atomic formulas and their negations as basic.

%In addition, we assume that all the structures from $\mathcal{K}$ have the same language. 

 Let $\mathcal{K}_0$ be a class of $L_0$-structures, and $\mathcal{K}_1$ be a class of $L_1$-structures. In the definition below, we use the following convention: An \emph{enumeration operator} $\Gamma$ is treated as a computably enumerable set of pairs $(\alpha,\varphi)$, where $\alpha$ is a finite set of basic $(L_0\cup \omega)$-sen\-ten\-ces, and $\varphi$ is a basic $(L_1\cup\omega)$-sentence. As usual, for a set $X$, we have $\Gamma(X) = \{ \varphi\,\colon (\alpha,\varphi) \in \Gamma,\ \alpha \subseteq X \}$.

\begin{definition}[{\cite{CCKM04,KMV07}}]
  An enumeration operator $\Gamma$ is a \emph{computable embedding} of $\mathcal{K}_0$ into $\mathcal{K}_1$, denoted by $\Gamma\colon \mathcal{K}_0 \leq_c \mathcal{K}_1$, if $\Gamma$ satisfies the following:
  \begin{enumerate}
  	\item For any $\A\in \mathcal{K}_0$, $\Gamma(\A)$ is the atomic diagram of a structure from $\mathcal{K}_1$.

  	\item For any $\A,\B\in \mathcal{K}_0$, we have $\A\cong\B$ if and only if $\Gamma(\A) \cong \Gamma(\B)$.
  \end{enumerate}	
\end{definition}

Any computable embedding has an important property of \emph{monotonicity}: If $\Gamma\colon \mathcal{K}_0 \leq_c \mathcal{K}_1$ and $\A\subseteq \B$ are structures from $\mathcal{K}_0$, then we have $\Gamma(\A) \subseteq \Gamma(\B)$ \cite[Proposition 1.1]{CCKM04}.

\begin{definition}[{\cite{CCKM04,KMV07}}]
  A Turing operator $\Phi=\varphi_e$ is a \emph{Turing computable embedding} of $\mathcal{K}_0$ into $\mathcal{K}_1$, denoted by $\Phi\colon \mathcal{K}_0 \leq_{tc} \mathcal{K}_1$, if $\Phi$ satisfies the following:
  \begin{enumerate}
  	\item For any $\A\in \mathcal{K}_0$, the function $\varphi^{D(\A)}_e$ is the characteristic function of the atomic diagram of a structure from $\mathcal{K}_1$. This structure is denoted by $\Phi(\A)$.

  	\item For any $\A,\B\in \mathcal{K}_0$, we have $\A\cong\B$ if and only if $\Phi(\A) \cong \Phi(\B)$.
  \end{enumerate}	
\end{definition}

\begin{proposition*}[Greenberg and, independently, Kalimullin; see \cite{Kal-18,KMV07}]
  If $\mathcal{K}_0 \leq_c \mathcal{K}_1$, then $\mathcal{K}_0 \leq_{tc} \mathcal{K}_1$. The converse is not true.
\end{proposition*}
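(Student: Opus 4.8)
The forward implication is routine; I would handle it as follows. Given an enumeration operator $\Gamma$ with $\Gamma\colon\mathcal{K}_0\leq_c\mathcal{K}_1$, I would define a Turing operator $\Phi$ that, on oracle $X=D(\A)$ with $\A\in\mathcal{K}_0$, first computes from $X$ an enumeration of the set $D(\A)$ (possible since $X$ is its characteristic function) and then runs $\Gamma$ on this enumeration to obtain an enumeration of $\Gamma(\A)$. By assumption $\Gamma(\A)$ is the atomic diagram of some $\mathcal{S}\in\mathcal{K}_1$, hence a \emph{complete} set of basic sentences on $\dom(\mathcal{S})$: for each basic sentence $\varphi$ about elements of $\dom(\mathcal{S})$, exactly one of $\varphi$ and the basic sentence expressing its negation turns up in the enumeration. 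Relabelling $\dom(\mathcal{S})$ by an initial segment of $\omega$ in order of appearance (the only slightly fiddly point, handled by the usual bookkeeping, and immediate when $\mathcal{S}$ is infinite --- the only case needed here), one reads off from $X$ the characteristic function of the atomic diagram of a copy $\Phi(\A)$ of $\mathcal{S}$, so $\Phi(\A)\in\mathcal{K}_1$. Since $\Phi(\A)\cong\Gamma(\A)$ for every $\A\in\mathcal{K}_0$, clause (2) for $\Phi$ is exactly clause (2) for $\Gamma$, and thus $\Phi\colon\mathcal{K}_0\leq_{tc}\mathcal{K}_1$.

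For the second assertion I would exhibit classes with $\mathcal{K}_0\leq_{tc}\mathcal{K}_1$ but $\mathcal{K}_0\not\leq_c\mathcal{K}_1$; either pair recalled in the Introduction works --- $(VS,ZS)$ of Chisholm--Knight--Miller~\cite{CKM07}, or $(\{\tilde\omega_S,\tilde\omega^\star_S\},\{\omega_S,\omega^\star_S\})$ of Ganchev--Kalimullin--Vatev~\cite{GKV}. In each case $\mathcal{K}_0\leq_{tc}\mathcal{K}_1$ is one half of a $tc$-equivalence proved in the cited source, while the failure of $\mathcal{K}_0\leq_c\mathcal{K}_1$ is proved there as well. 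The underlying reason is always the same: a computable embedding is realized by an enumeration operator, hence it is monotone ($\A\subseteq\B\Rightarrow\Gamma(\A)\subseteq\Gamma(\B)$, as already noted) and continuous, and it can act only on \emph{positive} information --- finitely many basic facts holding in the input --- never on the ``negative'' information that the input's domain, or its diagram, has come to an end. The classes are arranged so that a Turing operator, which has a genuine oracle, can decide membership in the input structure's domain and thereby tell the two members of $\mathcal{K}_0$ apart; a continuous, monotone enumeration operator cannot make that distinction in time, and a short argument (carried out in the cited sources) then derives a violation of clause (2) from any putative $\Gamma\colon\mathcal{K}_0\leq_c\mathcal{K}_1$.

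I expect the main obstacle to lie entirely in the second part, and within it not in producing the $tc$-embedding (easy once the classes are chosen suitably) but in verifying that \emph{no} enumeration operator suffices --- a genuine continuity/monotonicity argument of the same flavour as the non-embeddability proofs carried out later in this paper. For a self-contained account I would import that verification from \cite{CKM07,GKV,Kal-18}.
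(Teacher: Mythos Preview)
The paper does not actually prove this proposition: it is stated as a known result, attributed to Greenberg and Kalimullin with citations to \cite{Kal-18,KMV07}, and used without further argument. So there is nothing to compare your proof against in the paper itself.

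That said, your sketch is correct and is essentially the standard argument one finds in the cited sources. The forward direction is exactly as you describe: enumerate $D(\A)$ from its characteristic function, feed it to $\Gamma$, and use the fact that $\Gamma(\A)$ is a \emph{complete} set of basic sentences on its domain to recover a characteristic function (the relabelling-by-order-of-appearance trick being the usual way to handle the domain, and your caveat about finite structures is appropriate---in this paper all structures are infinite anyway). For the converse you correctly point to either of the two examples already quoted in the Introduction; both are valid witnesses, and your informal explanation of why enumeration operators fail (monotonicity and access to positive data only) captures the mechanism behind those non-embeddability proofs.
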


Both relations $\leq_c$ and $\leq_{tc}$ are preorders. If $\mathcal{K}_0 \leq_{tc} \mathcal{K}_1$ and $\mathcal{K}_1\leq_{tc} \mathcal{K}_0$, then we say that $\mathcal{K}_0$ and $\mathcal{K}_1$ are \emph{$tc$-equivalent}, denoted by $\mathcal{K}_0 \equiv_{tc} \mathcal{K}_1$. For a class $\mathcal{K}$, by $\deg_{tc}(\mathcal{K})$ we denote the family of all classes which are $tc$-equivalent to $\mathcal{K}$. Similar notations can be introduced for the $c$-reducibility.
This paper is focused on the degree $\deg_{tc}(\{ \omega,\omega^{\star}\})$, the reader is referred to~\cite{BazhGanVat-arxiv} for a more detailed discussion of related results. 

We note that except the reductions $\leq_c$ and $\leq_{tc}$, there are many other approaches to comparing computability-theoretic complexity of classes of structures. These approaches include: transferring degree spectra and other algorithmic properties~\cite{HKSS02}, $\Sigma$-reducibility \cite{EPS-11,Puz-09}, computable functors \cite{HMMM-17,MPSS-18}, Borel functors~\cite{HTMM-18}, primitive recursive functors~\cite{BDKM,DHTK}, etc.

% \medskip
% \hrule
% \medskip

For two $\omega$-chains $\overline{x} = {(x_i)}^\infty_{i=0}$ and $\overline{y} = {(y_j)}^\infty_{j=0}$, analogous to the relation $\subseteq^\star$ between sets,
let us denote by $\overline{x} <^\star \overline{y}$ the following infinitary sentence
\[\bigvee_{q\in\omega}\bigwedge_{i,j>q}x_i < y_j.\]

The following proposition is essential for our results. It is a
slight reformulation of Proposition 5.7 from \cite{BazhGanVat-arxiv}.

\begin{proposition}\label{prop:no-chains}
  Suppose $\{\omega \cdot 2, \omega^\star \cdot 2\} \leq_c \{\C,\mathcal{D}\}$ via $\Gamma$, where
  $\C$ is a linear order without infinite descending chains and $\mathcal{D}$ is an infinite order without infinite
  ascending chains.
  Let $\A$ and $\B$ be copies of $\omega$ with mutually disjoint domains. 
  Then for any $\omega$-chains ${(x_i)}^\infty_{i=0}$ and ${(y_i)}^\infty_{i=0}$ such that $\Gamma(\A) \models \bigwedge_{i\in\omega} x_i<x_{i+1}$ and
  $\Gamma(\B) \models \bigwedge_{i\in\omega} y_i < y_{i+1}$, it follows that 
  \[\Gamma(\A + \B) \models \overline{x} <^\star \overline{y}\ \lor\ \overline{y} <^\star \overline{x}.\]
\end{proposition}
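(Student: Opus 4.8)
The plan is to combine an elementary order-theoretic fact about well-orders with a fooling argument that uses the monotone, continuous (topological) nature of the enumeration operator $\Gamma$.

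\emph{Step 1: every image of a presentation of $\omega\cdot 2$ is a well-order.} Since $\A$ and $\B$ are copies of $\omega$ with disjoint domains, the ordered sum $\A+\B$ is isomorphic to $\omega\cdot 2$, so $\Gamma(\A+\B)$ is the atomic diagram of a structure in $\{\C,\D\}$. As $\A$ is an initial segment of $\A+\B$ and $\B$ a final segment, monotonicity of $\Gamma$ gives $\Gamma(\A)\cup\Gamma(\B)\subseteq\Gamma(\A+\B)$; in particular the chain $\overline{x}$ witnesses an infinite ascending chain in $\Gamma(\A+\B)$. Since $\D$ has no infinite ascending chains, $\Gamma(\A+\B)\cong\C$; and $\C$ has no infinite descending chains, so $L:=\Gamma(\A+\B)$ is a well-order. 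The same reasoning shows that $\Gamma(\A'+\B')$ is a well-order for \emph{any} copies $\A',\B'$ of $\omega$ with disjoint domains --- this is the only place where the hypotheses on $\C$ and $\D$ are used.

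\emph{Step 2: reduction to the interleaved case.} Both $\overline{x}$ and $\overline{y}$ are ascending $\omega$-chains inside the well-order $L$. Consider the suborder $M:=L\restriction(\overline{x}\cup\overline{y})$, a countably infinite well-order. If the order type of $M$ is different from $\omega$, then $M$ has an element with infinitely many predecessors. If that element is some $x_k$, then it has only $k$ predecessors among the $x_i$'s, hence infinitely many $y_j$ below it; but $\{j:y_j<_L x_k\}$ is downward closed in $\omega$ (the $y_j$'s are increasing), so it is all of $\omega$, and then all $y_j<_L x_k<_L x_{k+1}<_L\cdots$, i.e.\ $\overline{y}<^\star\overline{x}$. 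Symmetrically, if the element is some $y_k$ we obtain $\overline{x}<^\star\overline{y}$. So in either case one of the disjuncts of the conclusion already holds, and it remains only to rule out the possibility $M\cong\omega$, i.e.\ that $\overline{x}$ and $\overline{y}$ interleave cofinally in $L$ with neither $\overline{x}<^\star\overline{y}$ nor $\overline{y}<^\star\overline{x}$.

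\emph{Step 3: the interleaved case is impossible.} Assume $M\cong\omega$ and list it increasingly as $z_0<_L z_1<_L\cdots$. Then infinitely many $z_k$ come from $\overline{x}$ and infinitely many from $\overline{y}$, so there are infinitely many ``$y$-below-$x$'' steps ($z_k\in\overline{y}$, $z_{k+1}\in\overline{x}$) and infinitely many ``$x$-below-$y$'' steps, occurring arbitrarily high in $L$. Each such step is a basic sentence in $\Gamma(\A+\B)$, hence produced by a single axiom of $\Gamma$ whose premise is a finite subdiagram of $D(\A+\B)$ --- a finite linear configuration in which all the finitely many named $\A$-points precede all the finitely many named $\B$-points. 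The plan is to collect infinitely many such configurations of both kinds, together with the finite subdiagrams of $D(\A)$ and $D(\B)$ that generate $\overline{x}$ and $\overline{y}$, and to reassemble them inside a fresh input $\A'+\B'\cong\omega\cdot 2$ with disjoint domains, deciding which elements of $\A'$ and $\B'$ play the role of the $\A$- and $\B$-parts of each collected configuration. Because in $\A'+\B'$ every $\A'$-point precedes every $\B'$-point, one has enough room to make all the required axioms fire; alternating configurations of the two kinds (``$y$ below $x$'', then ``$x$ below $y$'', then ``$y$ below $x$'', \dots) higher and higher then forces $\Gamma(\A'+\B')$ to contain an infinite descending chain $\cdots>w_2>w_1>w_0$, contradicting Step~1 applied to $\A'$ and $\B'$.

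\emph{Expected main obstacle.} Steps 1 and 2 are routine; the technical heart is Step 3. One has to actually build $\A'$ and $\B'$ and verify that the collected axioms fire on the corresponding finite subdiagrams of $D(\A'+\B')$ \emph{and} that the resulting $\Gamma(\A'+\B')$ is a genuine atomic diagram --- the subtle point being the interaction between the output constants that $\Gamma$'s axioms name (which are fixed, not under our control) and the input elements we are free to choose inside $\A'$ and $\B'$, together with arranging that the ``connecting'' comparisons really do chain up into an infinite descending sequence rather than collapsing. This is exactly where enumeration operators behave differently from Turing operators, and it is the content of Proposition~5.7 of \cite{BazhGanVat-arxiv}, of which the present statement is a reformulation.
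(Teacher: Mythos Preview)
The paper does not give its own proof of this proposition: it simply states that the result ``is a slight reformulation of Proposition~5.7 from \cite{BazhGanVat-arxiv}'' and moves on. Your proposal is therefore already more detailed than what the paper provides, and your final sentence correctly identifies that the technical content lives in that cited reference.

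Your Steps~1 and~2 are correct and useful. The observation that $\Gamma(\A+\B)$ must be a copy of $\C$ (hence a well-order), together with the dichotomy ``either $M$ has type $>\omega$, and then one of the $<^\star$ disjuncts holds, or $M\cong\omega$'', is exactly the right preliminary reduction.

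Where you should be more careful is the Step~3 sketch. As written, the mechanism by which ``alternating configurations higher and higher'' yields an infinite descending chain in $\Gamma(\A'+\B')$ is not clear. The output constants $x_i,y_j$ are fixed natural numbers named in the axioms of $\Gamma$; an axiom $(\delta,\varphi)$ fires on $\A'+\B'$ only if the \emph{specific} constants occurring in $\delta$ appear in $D(\A'+\B')$ with the same relations. So if $\A',\B'$ are genuinely ``fresh'' (new domains), none of the collected axioms fire and you learn nothing about $x_i,y_j$; and if $\A',\B'$ reuse the elements of $\A\cup\B$, you must explain how reassigning those elements between $\A'$ and $\B'$ produces comparisons that actually chain \emph{downward}, given that both $\overline{x}$ and $\overline{y}$ are increasing in every image. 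The sketch does not supply this link. Since you explicitly defer the details to Proposition~5.7 of \cite{BazhGanVat-arxiv}, this is not a fatal gap in your write-up---it is, in fact, exactly what the paper itself does---but you should not present the descending-chain paragraph as if it were a self-contained argument.
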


\subsection{Further background} \label{subsect:bacground}

This paper is focused on the degree $\deg_{tc}(\{ \omega,\omega^{\star}\})$. Historically speaking, the choice of this particular degree was motivated by the following open question:

\begin{problem*}[Kalimullin]
	It is easy to show that the pairs $\{ \omega, \omega^{\star}\}$ and $\{ \tilde\omega, \tilde\omega^{\star}\}$ are $tc$-equivalent. Moreover, $\{ \omega, \omega^{\star}\} \leq_c \{ \tilde\omega, \tilde\omega^{\star}\}$. Is there a computable embedding from $\{ \tilde\omega, \tilde\omega^{\star}\}$ to $\{ \omega, \omega^{\star}\}$?
\end{problem*}

This problem was a starting point of investigations of~\cite{BazhGanVat-arxiv} and the current paper. One can attack the problem via employing model-theoretic properties of the structures (in a way similar to~\cite{CKM07}). In particular, a naive way to distinguish these pairs would be the following. Each of the orders $\omega$ and $\omega^{\star}$ is rigid, while both $\tilde\omega$ and $\tilde\omega^{\star}$ have continuum many automorphisms. Maybe, this fact can help us to prove that $\{ \tilde\omega, \tilde\omega^{\star}\} \nleq_c \{ \omega, \omega^{\star}\}$? Nevertheless, this is \emph{not} the case~--- one can show that $\{ \tilde\omega, \tilde\omega^{\star}\} \equiv_c \{ (\omega^2, B), (\omega\cdot\omega^{\star},B) \}$, where $B$ is the standard block relation on a linear order. Since the structures $(\omega^2, B)$ and $(\omega\cdot\omega^{\star},B)$ are both rigid, it seems that studying automorphism groups does not help in this setting.

We note that quite unexpectedly (at least for us), the theory of Turing computable embeddings found applications in algorithmic learning theory. Section~3.2 of~\cite{BFS-arxiv} establishes connections between $tc$-embeddings and a particular paradigm of learnability for classes of countable structures. Informally speaking, this paradigm employs a learner whose goal is, given the atomic diagram of a structure $\mathcal{A}$, to learn the isomorphism type of $\mathcal{A}$. The learner is allowed to use both positive and negative data provided by the atomic diagram. Remarkably, the family $\{\omega, \omega^{\star}\}$ is learnable by a computable learner. We conjecture that our results can be also connected to learnability, specifically to its topological aspects (see, e.g., \cite{dBY}). The reader is referred to~\cite{BazhGanVat-arxiv} for more results on $\deg_{tc}(\{ \omega,\omega^{\star}\})$. 

\section{Positive results} \label{sect:omega-times-2}

Let $\A$ be a linear ordering and let us have, for all $a \in \A$, the linear orderings $\B_a$ with mutually disjoint domains.
Following Rosenstein~\cite{Ros82}, we define the generalized sum $\C = \sum_{a\in\A} \B_a$ as the linear ordering such that $\dom(\C) =
\bigcup_{a\in\A} \dom(\B_a)$ and for any $x,y \in \C$, we define $x <_\C y$ iff $x,y \in \B_a$ for some $a \in \A$ and $x <_{\B_a} y$, or
$x \in \B_a$, $y \in \B_{a'}$ and $a <_\A a'$.

% Let us start with the following basic result.
\begin{theorem}\label{th:powers-simple}
  For any natural number $n \geq 1$,
  \[\{\omega \cdot n, \omega^\star \cdot n\} \leq_c \{\omega^2 \cdot n, {(\omega^2)}^\star \cdot n\}.\]
\end{theorem}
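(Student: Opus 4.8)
The plan is to give a single uniform enumeration operator $\Gamma$ that takes a copy of $\omega \cdot n$ and produces a copy of $\omega^2 \cdot n$, and takes a copy of $\omega^\star \cdot n$ and produces a copy of $(\omega^2)^\star \cdot n$. The core idea is the ``blow-up'' construction: replace each point of the input order by a block which, when the input point is part of an $\omega$-chain, grows into a copy of $\omega$, and when the input point is part of an $\omega^\star$-chain, grows into a copy of $\omega^\star$. Since $\omega \cdot n = \sum_{i<n} \omega$ and substituting $\omega$ for each point of $\omega$ gives $\omega \cdot \omega = \omega^2$, this substitution sends $\omega \cdot n$ to $\omega^2 \cdot n$; by the same token it sends $\omega^\star \cdot n$ to $(\omega^\star)\cdot(\omega^\star)\cdot\ldots = (\omega^2)^\star \cdot n$, using that the reverse of $\omega^2 = \omega\cdot\omega$ is $\omega^\star \cdot \omega^\star = (\omega^2)^\star$.

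Concretely, I would fix a computable bijection $\omega \to \omega \times \omega$, $m \mapsto (\pi_1(m),\pi_2(m))$, and let the output domain be $\omega$ viewed as $\omega \times \omega$ via this pairing: the pair $(a,k)$ is the $k$-th point of the block sitting over the input point $a$. The operator $\Gamma$ then enumerates basic sentences as follows. Whenever it sees in the input diagram the fact $a <_{\text{in}} b$ (with $a \ne b$), it enumerates $(a,k) <_{\text{out}} (b,\ell)$ for all $k,\ell$; whenever it sees $b <_{\text{in}} a$ it enumerates $(b,\ell) <_{\text{out}} (a,k)$ for all $k,\ell$; and within a single block over $a$ it enumerates $(a,k) <_{\text{out}} (a,\ell)$ whenever it has seen evidence that $a$ has infinitely many predecessors, and $(a,\ell) <_{\text{out}} (a,k)$ whenever it has seen evidence that $a$ has infinitely many successors — here $k<\ell$. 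To make this an enumeration operator I cannot literally wait for an infinite amount of evidence, so instead I orient the intra-block order using a local, monotone rule: enumerate the intra-block comparisons between $(a,k)$ and $(a,\ell)$ only after seeing at least, say, $k+\ell+1$ many elements below $a$ (for the ``$\omega$'' orientation) or above $a$ (for the ``$\omega^\star$'' orientation) in the input. For an input of type $\omega \cdot n$ every point has finitely many predecessors after a point — actually every point has finitely many predecessors only in the first block; I would instead key the orientation off whether the input has a least element versus a greatest element, which an enumeration operator can detect positively on exactly one side: if $a$ is ever seen to lie above $n$-many distinct elements we commit block $a$ to the $\omega$ orientation, and symmetrically for $\omega^\star$. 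Since the two classes $\omega \cdot n$ and $\omega^\star \cdot n$ are distinguished by presence of a least versus a greatest element, exactly one branch fires, consistently across all blocks.

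The verification has three parts. First, soundness on $\omega \cdot n$: one checks that the enumerated sentences describe the linear order obtained by replacing each of the $\omega$-many points of each of the $n$ summands by a copy of $\omega$, which is order-isomorphic to $\omega^2 \cdot n$; the analogous check on $\omega^\star \cdot n$ gives $(\omega^2)^\star \cdot n$, using $(\omega\cdot\omega)^\star \cong \omega^\star \cdot \omega^\star$. Second, the output is genuinely a structure in the target class for \emph{every} input copy in the source class, not merely for one fixed copy — this follows from the uniformity of the pairing and the monotone orientation rule. Third, the isomorphism-invariance condition $\A \cong \B \iff \Gamma(\A) \cong \Gamma(\B)$: the forward direction is immediate from uniformity, and the backward direction holds because the two possible outputs, $\omega^2 \cdot n$ and $(\omega^2)^\star \cdot n$, are themselves non-isomorphic, so $\Gamma(\A) \cong \Gamma(\B)$ forces $\A$ and $\B$ to come from the same side.

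I expect the main obstacle to be purely bookkeeping: arranging the intra-block orientation rule so that it is (a) realizable by an enumeration operator, i.e. triggered by positive finite evidence only, (b) monotone, so that $\A \subseteq \B$ implies $\Gamma(\A) \subseteq \Gamma(\B)$ as required for computable embeddings, and (c) correct on every copy of every structure in the source class. The cleanest way around this is to exploit that in $\{\omega\cdot n, \omega^\star\cdot n\}$ the \emph{only} distinction an enumeration operator needs is least-versus-greatest element, which is detectable positively on exactly one side, so the orientation never needs to be revised; all the ``infinitely many predecessors'' talk above is then just a way of describing the intended limit object and need not be literally tested. Apart from that, the argument is routine.
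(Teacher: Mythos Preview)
Your overall plan---replace each input point by a block and order the blocks lexicographically---is the right shape, and it is essentially what the paper does. The gap is in the intra-block orientation rule, which you correctly flagged as the main obstacle but did not actually resolve.

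Both of your proposed triggers fail for the same monotonicity reason. Any finite set $\alpha$ of basic sentences describing a finite linear order is realized inside some copy of $\omega\cdot n$ \emph{and} inside some copy of $\omega^\star\cdot n$. Hence if $(\alpha,\ (a,k)<(a,\ell))\in\Gamma$, then $\Gamma$ outputs $(a,k)<(a,\ell)$ on some copy of $\omega^\star\cdot n$ as well; but on that copy you also need the opposite inequality to make the block over $a$ have type $\omega^\star$, so the output is not a linear order. Concretely, for $n\ge 2$ every point $a$ outside the first summand of $\omega\cdot n$ (and every point outside the last summand of $\omega^\star\cdot n$) has infinitely many elements both above and below it, so your ``many predecessors'' and ``many successors'' triggers \emph{both} fire on the same $a$ in the same input and you enumerate contradictory facts. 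Your fallback, keying off a least versus a greatest element, does not help either: ``$a$ is least'' is a $\Pi_1$ condition, never witnessed by a finite set of basic sentences, so an enumeration operator cannot commit to an orientation on that basis. The claim that ``exactly one branch fires, consistently across all blocks'' is simply false.

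The paper's one-line fix is to stop trying to decide the orientation: let the block over $a$ be a copy of the \emph{input order} $\mathcal{L}$ itself, i.e.\ output all pairs $(a,b)$ with $b\in\dom(\mathcal{L})$, ordered in the second coordinate by $\le_{\mathcal{L}}$, with all pairs ordered lexicographically. The intra-block order is then read straight off the input diagram with no decision needed, and one checks $\sum_{a\in\omega\cdot n}(\omega\cdot n)=\omega^2\cdot n$ and $\sum_{a\in\omega^\star\cdot n}(\omega^\star\cdot n)=(\omega^2)^\star\cdot n$. That is the missing idea in your argument.
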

\begin{proof}
  The same enumeration operator $\Gamma$ works for all $n \geq 1$.
  For a linear ordering $\mathcal{L}$ and $a \in \mathcal{L}$, let $\mathcal{L}_a$
  be the linear ordering consisting of pairs $(a,b)$, where $b \in \mathcal{L}$, and
  ordered by the second component as in $\mathcal{L}$.
  Informally, for each element $a$ in the input linear order $\mathcal{L}$,
  the enumeration operator outputs $\mathcal{L}_a$.
  Moreover, all pairs in $\Gamma(\mathcal{L})$ are ordered lexicographically by the order induced by $\mathcal{L}$.
  In other words, $\Gamma(\mathcal{L}) = \sum_{a\in \mathcal{L}} \mathcal{L}_a$.
  \begin{itemize}
  \item
    If $\mathcal{L} \cong \omega \cdot n$, then
    \[\Gamma(\mathcal{L}) \cong \sum_{j\in n}\sum_{i\in\omega} \omega \cdot n = \sum_{j\in n}\omega^2 = \omega^2 \cdot n.\]
  \item
    If $\mathcal{L} \cong \omega^\star \cdot n$, then
    \[\Gamma(\mathcal{L}) \cong \sum_{j\in n}\sum_{i\in\omega^\star} \omega^\star \cdot n = \sum_{j\in n}{(\omega^2)}^\star = {(\omega^2)}^\star \cdot n.\]
  \end{itemize}
\end{proof}

For the next result, we need the following notation.
For a linear ordering $\mathcal{L}$ and an element $a$ in $\mathcal{L}$, we define
\begin{align*}
  \texttt{left}_{\mathcal{L}}(a) & = |\{b \in \dom(\mathcal{L}) \mid b \leq_{\mathcal{L}} a\}|\\
  \texttt{right}_{\mathcal{L}}(a) & = |\{b \in \dom(\mathcal{L}) \mid b \geq_{\mathcal{L}} a\}|\\
  \texttt{rad}_{\mathcal{L}}(a) & = \min\{\texttt{left}_{\mathcal{L}}(a), \texttt{right}_{\mathcal{L}}(a)\}.
\end{align*}
Informally, we will show that there exists an enumeration operator $\Gamma$ which can ``guess'' whether an element $a$ in the input
linear ordering $\mathcal{L}$ has finite or infinite radius, denoted $\texttt{rad}_{\mathcal{L}}(a)$.

\begin{theorem}\label{th:omega-2-to-square}
  $\{\omega \cdot 2, \omega^\star \cdot 2\} \leq_c \{\omega^2, {(\omega^2)}^\star \}$.
\end{theorem}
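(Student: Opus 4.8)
The plan is to define $\Gamma$ as a generalized sum $\Gamma(\mathcal{L}) = \sum_{a \in \mathcal{L}} C_a$ in which the column $C_a$ above an input element $a$ is a full copy of $\mathcal{L}$ itself when $\texttt{rad}_{\mathcal{L}}(a)$ is infinite, and a nonempty \emph{finite} linear order when $\texttt{rad}_{\mathcal{L}}(a)$ is finite. Granting this, the output order types come out correctly. Write $\omega \cdot 2 = \mathcal{L}^{(0)} + \mathcal{L}^{(1)}$ with $\mathcal{L}^{(0)} \cong \mathcal{L}^{(1)} \cong \omega$: every element of $\mathcal{L}^{(0)}$ has finitely many predecessors, hence finite radius, whereas every element of $\mathcal{L}^{(1)}$ has infinitely many elements on both sides, hence infinite radius. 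Since a sum over $\omega$ of nonempty finite orders is $\omega$, and (as in the proof of Theorem~\ref{th:powers-simple}) $\sum_{i \in \omega} (\omega \cdot 2) \cong \omega^2$, we get
\[
\Gamma(\omega \cdot 2) \;\cong\; \Big(\textstyle\sum_{a \in \mathcal{L}^{(0)}} C_a\Big) \;+\; \Big(\textstyle\sum_{a \in \mathcal{L}^{(1)}} C_a\Big) \;\cong\; \omega + \omega^2 \;=\; \omega^2 .
\]
Dually, writing $\omega^\star \cdot 2 = \mathcal{L}^{(0)} + \mathcal{L}^{(1)}$ with $\mathcal{L}^{(0)} \cong \mathcal{L}^{(1)} \cong \omega^\star$, the elements of $\mathcal{L}^{(0)}$ have infinite radius and those of $\mathcal{L}^{(1)}$ have finite radius, so using $\sum_{i \in \omega^\star}(\omega^\star \cdot 2) \cong \big(\sum_{i \in \omega}(\omega \cdot 2)\big)^\star \cong (\omega^2)^\star$ together with $(\omega^2)^\star + \omega^\star = (\omega + \omega^2)^\star = (\omega^2)^\star$, we get $\Gamma(\omega^\star \cdot 2) \cong (\omega^2)^\star$. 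As $\omega^2 \not\cong (\omega^2)^\star$ and the source class has only these two isomorphism types, condition (2) of the definition of a computable embedding will follow as well.

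The real work is to realize this monotonically as an enumeration operator, the point being that ``$\texttt{rad}_{\mathcal{L}}(a)$ is infinite'' is not a positive condition, so we cannot first decide the radius of $a$ and then pick $C_a$. First I would fix the enumeration $a_0, a_1, a_2, \dots$ of $\dom(\mathcal{L})$ induced by the incoming diagram and let the underlying set of the output consist of (codes for) pairs $(a_i, a_j)$; the pair $(a_i, a_j)$ is put into $\Gamma(\mathcal{L})$ precisely when the part of $D(\mathcal{L})$ enumerated so far already exhibits at least $j$ elements $\leq_{\mathcal{L}} a_i$ and at least $j$ elements $\geq_{\mathcal{L}} a_i$ — a finite amount of evidence, so this is coded by a c.e.\ set of pairs $(\alpha,\varphi)$ — and the output order is the lexicographic order induced by $<_{\mathcal{L}}$ on both coordinates, which again needs only finitely much of $D(\mathcal{L})$ to be justified on the pairs already present. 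Then $C_{a_i} := \{(a_i, a_j) : (a_i,a_j)\text{ enters the output}\}$ equals all of $\{(a_i,a_j) : j \in \omega\}$, hence a full copy of $\mathcal{L}$, when $\texttt{rad}_{\mathcal{L}}(a_i) = \infty$, and equals $\{(a_i, a_j) : j \leq m\}$, a nonempty order of size $m+1$, when $\texttt{rad}_{\mathcal{L}}(a_i) = m < \infty$; so $\Gamma(\mathcal{L}) \cong \sum_{a \in \mathcal{L}} C_a$ with the columns as prescribed, and the first paragraph applies.

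The one genuinely delicate step is this last one: instead of deciding radii, we let every column grow toward a full copy of $\mathcal{L}$, driven by the positive predicate ``$\texttt{rad}_{\mathcal{L}}(a) \geq j$''; columns over finite-radius elements then stall at finite size, and the order-type computation shows that an $\omega$- (resp.\ $\omega^\star$-) indexed sum of such stalled columns contributes only an $\omega$ (resp.\ $\omega^\star$) summand that is absorbed by $\omega^2$ (resp.\ $(\omega^2)^\star$). Verifying that $\Gamma$ respects isomorphism and outputs nothing outside $\{\omega^2, (\omega^2)^\star\}$ on inputs from $\{\omega \cdot 2, \omega^\star \cdot 2\}$ is then routine, using the monotonicity of enumeration operators noted above.
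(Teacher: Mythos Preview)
Your proof is correct and follows essentially the same route as the paper: define the output as a lexicographically ordered set of pairs $(a,b)$, with the column over $a$ growing in lockstep with the positive predicate $\texttt{rad}_{\mathcal{L}}(a)\geq j$, so that infinite-radius elements contribute a full copy of $\mathcal{L}$ and finite-radius elements contribute a nonempty finite order whose $\omega$- (resp.\ $\omega^\star$-)indexed sum is absorbed. The only cosmetic difference is that the paper phrases the threshold as ``$a_j \leq_{\mathbb N} \texttt{rad}_{\mathcal{L}}(a_i)$'' rather than via an enumeration index; since domains are subsets of $\omega$ you should state the threshold directly in terms of the natural-number identity of the second coordinate (enumeration operators take sets, not sequences, so ``the enumeration induced by the incoming diagram'' is not literally well-defined), but this is a wording issue and not a gap.
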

\begin{proof}
  We informally describe the work of the enumeration operator $\Gamma$.
  Suppose we have as input the finite linear ordering $\mathcal{L} = a_0 < a_1 < a_2 < \cdots < a_n$.
  For each $a_i$ in $\mathcal{L}$, $\Gamma$
  outputs the pairs of the form $(a_i,a_j)$, where $a_j \leq_{\mathbb{N}} \texttt{rad}_{\mathcal{L}}(a_i)$,
  where $\leq_{\mathbb{N}}$ is the standard ordering of natural numbers.
  All pairs in the output structure are ordered in lexicographic order.
  This concludes the description of how $\Gamma$ operates.
  Now we have two cases to consider for the input structure $\A$.
  
  Suppose that $\A = \A_1 + \A_2$, where $\A_{1}$ and $\A_2$ are copies of $\omega$.
  If $a \in \A_1$ is its $k$-th least element, then $\texttt{rad}_{\A}(a) = k$ and hence $a$
  contributes at most $k$ pairs to $\Gamma(\A)$.
  If $a \in \A_2$, then clearly $\texttt{rad}_{\A}(a) = \aleph_0$ and hence
  $a$ contributes infinitely many pairs to $\Gamma(\A)$, forming a linear ordering of type $\omega \cdot 2$.

  We conclude that in this case
  \[\Gamma(\A) \cong \sum_{i \in \omega}i + \sum_{i\in\omega} \omega \cdot 2 \cong \omega + \omega^2 = \omega^2.\]
\item
  Suppose that $\A = \A_1 + \A_2$, where $\A_{1}$ and $\A_2$ are copies of $\omega^\star$.
  If $a \in \A_1$, then $a$ contributes infinitely many elements of type $\omega^\star \cdot 2$ in $\Gamma(\A)$.
  If $a \in \A_2$ is its $k$-th greatest element, then $a$ contributes at most $k$ pairs in $\Gamma(\A)$.
  We conclude that in this case
  \[\Gamma(\A) \cong \sum_{i\in\omega^\star} \omega^\star \cdot 2 + \sum_{i\in\omega^\star} i \cong {(\omega^2)}^\star + \omega^\star= {(\omega^2)}^\star.\]
\end{proof}

\begin{corollary}\label{cr:omega-n-omega-squared}
  For any natural number $n \geq 1$,
  \[\{\omega \cdot (n+1), \omega^\star \cdot (n+1)\} \leq_c \{\omega^2 \cdot n, {(\omega^2)}^\star \cdot n \}.\]
\end{corollary}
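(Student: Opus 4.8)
The plan is to verify that the \emph{same} enumeration operator $\Gamma$ used in the proof of Theorem~\ref{th:omega-2-to-square} already witnesses this reduction; the only new work is to recompute the order type of $\Gamma(\mathcal{L})$ with $n+1$ blocks in place of $2$. Recall that $\Gamma$ acts as follows: for each element $a$ of the input linear order $\mathcal{L}$ it outputs the pairs $(a,b)$ with $b\leq_{\mathbb{N}}\texttt{rad}_{\mathcal{L}}(a)$, and it orders all pairs lexicographically with respect to $\leq_{\mathcal{L}}$. An element $a$ of finite radius then contributes only a finite chain, whereas an element $a$ with $\texttt{rad}_{\mathcal{L}}(a)=\aleph_0$ contributes a chain isomorphic to $\mathcal{L}$ itself; monotonicity and well-definedness of $\Gamma$ are exactly as in Theorem~\ref{th:omega-2-to-square}.

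The key structural observation is that for $\mathcal{L}\cong\omega\cdot(n+1)=B_0+B_1+\cdots+B_n$ the finite-radius elements are precisely those of the first block $B_0$ (for every element of $B_1+\cdots+B_n\cong\omega\cdot n$ both sides are infinite), and symmetrically for $\mathcal{L}\cong\omega^\star\cdot(n+1)=C_0+\cdots+C_{n-1}+C_n$ the finite-radius elements are precisely those of the last block $C_n$, with $C_0+\cdots+C_{n-1}\cong\omega^\star\cdot n$. In the first case the finite-radius block sits at the bottom of $\mathcal{L}$, its elements' contributions grow without bound, so their sum is a copy of $\omega$ occurring at the bottom of $\Gamma(\mathcal{L})$; in the second case the finite-radius block sits at the top, contributing a copy of $\omega^\star$ at the top of $\Gamma(\mathcal{L})$. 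Each infinite-radius element contributes a copy of $\mathcal{L}$.

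Assembling the pieces, for $\mathcal{L}\cong\omega\cdot(n+1)$ one gets
\[\Gamma(\mathcal{L})\;\cong\;\omega+\sum_{a\in\omega\cdot n}\bigl(\omega\cdot(n+1)\bigr)\;\cong\;\omega+\omega^2\cdot n\;\cong\;\omega^2\cdot n,\]
using $\sum_{i\in\omega}\omega\cdot(n+1)=(\omega\cdot(n+1))\cdot\omega=\omega\cdot\omega=\omega^2$ and $\omega+\omega^2=\omega^2$; and for $\mathcal{L}\cong\omega^\star\cdot(n+1)$,
\[\Gamma(\mathcal{L})\;\cong\;\Bigl(\sum_{a\in\omega^\star\cdot n}\bigl(\omega^\star\cdot(n+1)\bigr)\Bigr)+\omega^\star\;\cong\;(\omega^2)^\star\cdot n+\omega^\star\;\cong\;(\omega^2)^\star\cdot n,\]
where the middle sum is the reverse of the one in the previous display (hence isomorphic to $(\omega^2\cdot n)^\star=(\omega^2)^\star\cdot n$) and $(\omega^2)^\star+\omega^\star=(\omega+\omega^2)^\star=(\omega^2)^\star$. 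Since these two output order types are exactly the members of $\{\omega^2\cdot n,(\omega^2)^\star\cdot n\}$ and depend only on the order type of the input, both clauses of the definition of a computable embedding hold. I expect the only delicate point to be the bookkeeping in the reverse case: one must correctly identify the unique finite-radius block (now the last one), track the reversal of the generalized sum, and note that the stray $\omega^\star$ lands at the top of $\Gamma(\mathcal{L})$, where it is absorbed on the right by $(\omega^2)^\star$.
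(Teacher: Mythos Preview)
Your proposal is correct and follows essentially the same approach as the paper: both reuse the operator $\Gamma$ from Theorem~\ref{th:omega-2-to-square}, identify the unique finite-radius block (the first block in the $\omega$ case, the last in the $\omega^\star$ case), and compute $\Gamma(\mathcal{L})\cong\omega+\omega^2\cdot n=\omega^2\cdot n$ and $\Gamma(\mathcal{L})\cong(\omega^2)^\star\cdot n+\omega^\star=(\omega^2)^\star\cdot n$ respectively. The only cosmetic difference is that you invoke the reversal duality $(\sum_{a\in L}M)^\star=\sum_{a\in L^\star}M^\star$ to deduce the $\omega^\star$ case from the $\omega$ case, whereas the paper redoes the computation directly.
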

\begin{proof}
  We use the same enumeration operator $\Gamma$ as in Theorem~\ref{th:omega-2-to-square}.
  Suppose $\A~=~\A_0 + \A_1 + \cdots + \A_n$, where each $\A_i$ is a copy of $\omega$.
  Then if $a \in \A_0$ is the $k$-th least element,
  then $a$ contributes at most $k$ pairs in $\Gamma(\A)$.
  If $a \in \A_i$, where $i > 0$, then $a$ contributes infinitely many pairs of the type of $\A$ to $\Gamma(\A)$.
  It follows that
  \begin{align*}
    \Gamma(\A) & \cong \sum_{i\in\omega}i + \underbrace{\sum_{i\in\omega} \omega\cdot (n+1) + \cdots + \sum_{i\in\omega}\omega \cdot (n+1)}_{n}\\
               & = \omega + \omega^2 \cdot n \\
               & = \omega^2 \cdot n.
  \end{align*}
  Suppose $\A = \A_0 + \A_1 + \cdots + \A_n$, where each $\A_i$ is a copy of $\omega^\star$.
  Then if $a \in A_n$ is the $k$-th greatest element, then $a$ contributes at most $k$ pairs in $\Gamma(\A)$.
  If $a \in \A_i$, where $0 \leq i < n$, then $a$ contributes infinitely many pairs of the type of $\A$ to $\Gamma(\A)$.
  It follows that
  \begin{align*}
    \Gamma(\A) & \cong \underbrace{\sum_{i\in\omega^\star}\omega^\star\cdot (n+1) + \cdots + \sum_{i\in\omega^\star} \omega^\star\cdot (n+1)}_{n} + \sum_{i\in\omega^\star} i\\
               & = {(\omega^2)}^\star \cdot n + \omega^\star\\
               & = {(\omega^2)}^\star \cdot n.
  \end{align*}
\end{proof}

\begin{corollary} \label{corol:omega-times-2}
  For any natural number $n \geq 1$,
  \[\{\omega \cdot 2, \omega^\star \cdot 2\} \leq_c \{\omega^2 \cdot n, {(\omega^2)}^\star \cdot n \}.\]
\end{corollary}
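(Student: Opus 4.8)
The plan is to combine the two results already in hand. For $n = 1$, the statement is exactly Theorem~\ref{th:omega-2-to-square}, so that case is done. For $n \geq 2$, I would apply Corollary~\ref{cr:omega-n-omega-squared} with the parameter $n$ there set to $2$: that corollary gives $\{\omega \cdot 3, \omega^\star \cdot 3\} \leq_c \{\omega^2 \cdot 2, (\omega^2)^\star \cdot 2\}$, which is not yet what we want. So instead the cleaner route is to chain a reduction $\{\omega \cdot 2, \omega^\star \cdot 2\} \leq_c \{\omega^2, (\omega^2)^\star\}$ with a ``padding'' reduction $\{\omega^2, (\omega^2)^\star\} \leq_c \{\omega^2 \cdot n, (\omega^2)^\star \cdot n\}$ and invoke transitivity of $\leq_c$ (noted in the Preliminaries).

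The first link is Theorem~\ref{th:omega-2-to-square}. For the second link I would exhibit the obvious enumeration operator that appends $n$ copies: given an input linear order $\mathcal{L}$, output $\sum_{j \in n} \mathcal{L}$ (i.e. $\mathcal{L} \cdot n$ in the sense that each element $a$ of $\mathcal{L}$ contributes the tagged copies $(a, 0), (a, 1), \dots, (a, n-1)$, ordered so that the whole thing is $n$ consecutive blocks each order-isomorphic to $\mathcal{L}$ — more precisely one orders first by the index $j \in n$ and then by the $\mathcal{L}$-order on $a$, which is clearly an enumeration operator since it only needs the atomic diagram of $\mathcal{L}$). When $\mathcal{L} \cong \omega^2$ this produces $\omega^2 \cdot n$, and when $\mathcal{L} \cong (\omega^2)^\star$ it produces $(\omega^2)^\star \cdot n$; since $\omega^2 \not\cong (\omega^2)^\star$ implies $\omega^2 \cdot n \not\cong (\omega^2)^\star \cdot n$ (the former has a least element, the latter does not), the isomorphism-invariance condition holds. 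Hence $\{\omega^2, (\omega^2)^\star\} \leq_c \{\omega^2 \cdot n, (\omega^2)^\star \cdot n\}$ for every $n \geq 1$, and composing with Theorem~\ref{th:omega-2-to-square} yields the claim.

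There is essentially no obstacle here — the only points requiring a word of care are that the composition of two enumeration operators witnessing $\leq_c$ is again such an operator (standard, and already used implicitly when asserting $\leq_c$ is a preorder), and that the padded structures remain non-isomorphic, which is immediate from the presence or absence of a least element. In the written-up version I would simply state that the result follows from Theorem~\ref{th:omega-2-to-square} together with the $n$-fold copying operator and transitivity of $\leq_c$, without belaboring the verification.
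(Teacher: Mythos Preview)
Your proposal is correct and matches the paper's approach: the paper likewise takes the operator $\Gamma$ from Theorem~\ref{th:omega-2-to-square} and outputs $n$ disjoint copies of $\Gamma(\A)$. The only cosmetic difference is that the paper describes the composed operator directly rather than phrasing it as transitivity of $\leq_c$, but the underlying construction is identical.
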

\begin{proof}
  This is straightforward.
  Let $\Gamma$ be the enumeration operator from Theorem~\ref{th:omega-2-to-square}.
  Then for a natural number $n \geq 1$, the embedding will be obtained by the enumeration operator, which, for linear ordering $\A$,
  simply copies $n$ number of times the linear ordering $\Gamma(\A)$.
\end{proof}

\section{The case $\{\omega \cdot 3, \omega^\star \cdot 3\} \not\leq_c \{\omega^2, {(\omega^2)}^\star\}$}\label{sect:case-3-2}
In this section, towards a contradiction, assume $\Gamma : \{\omega \cdot 3, \omega^\star \cdot 3\} \leq_c \{\omega^2,
{(\omega^2)}^\star\}$.
Let $\B$ be a copy of $\omega \cdot 3$ (or the reverse ordinal). In general, for
a subordering $\A$ of $\B$, we may have that $\Gamma(\A)$ is not a linear ordering.
For example, we may have $x,y\in \Gamma(\A)$, but none of the sentences $x<y$ or $y < x$ are in $\Gamma(\A)$.
Suppose $\Gamma(\B) \models x < y$. Then we claim that there is no extension $\C$ of $\A$ such that $\Gamma(\C) \models y < x$.
In other words, although $\Gamma(\A)$ does not ``know'' the relation between $x$ and $y$, this relation is already fixed.
Assume there is such an extension $\C$ for which $\Gamma(\C) \models y < x$. By compactness of enumeration operators, we may suppose that $\C$ extends $\A$ by only finitely many elements.
We can find another finite extension $\D$ of $\A$ with $\dom(\D) \cap \dom(\B) = \dom(\A)$ and $\dom(\D) \cap \dom(\C) = \dom(\A)$ such that
$\Gamma(\D) \models x < y \lor y < x$. Now we use monotonicity.
If $\Gamma(\D) \models x < y$, then we must have $\Gamma(\C \cup \D) \models x<y\ \&\ y < x$.
If $\Gamma(\D) \models y < x$, then we must have $\Gamma(\B \cup \D) \models x<y\ \&\ y < x$. In both cases we reach a contradiction.

% The above argument allows us to state the following.

\begin{remark}
  It is safe to always suppose that if $\A$ is a linear ordering (or its corresponding reverse linear ordering),
  then $\Gamma(\A)$ is also a linear ordering.
\end{remark}

Let us denote by $a <^\infty b$ the computable infinitary sentence saying that there are infinitely many elements between $a$ and $b$.

\begin{proposition}\label{prop:omega-not-2}
  For any infinite and coinfinite set $A$, if
  there is a copy $\A$ of $\omega$ with $\dom(\A) = A$ such that $\Gamma(\A) \cong \omega^2$,
  then there is no copy $\B$ of $\omega$ with $\dom(\B) \subseteq \mathbb{N} \setminus A$ such that
  $\Gamma(\B) \cong \omega^2$.
\end{proposition}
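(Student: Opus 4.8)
The plan is to derive a contradiction from the existence of two disjoint copies $\A, \B$ of $\omega$ with $\Gamma(\A) \cong \omega^2$ and $\Gamma(\B) \cong \omega^2$, by building an extension of $\A \cup \B$ that $\Gamma$ must send into $\{\omega^2, (\omega^2)^\star\}$, and showing that the image cannot possibly have either order type. Since $A$ is infinite and coinfinite and $\dom(\B) \subseteq \mathbb{N} \setminus A$, the two domains are disjoint, so $\A$ and $\B$ can be simultaneously realized as suborderings of a single structure. The key move is to consider the structure $\A + \B$ (that is, place all of $\A$ below all of $\B$), which is a copy of $\omega \cdot 2$ and hence \emph{extends} to a copy of $\omega \cdot 3$ in $\{\omega \cdot 3, \omega^\star \cdot 3\}$; so $\Gamma(\A + \B)$ is a substructure of a linear order of type $\omega^2$ (it cannot be a substructure of $(\omega^2)^\star$, since $\Gamma(\A) \cong \omega^2$ already embeds an infinite ascending chain, which $(\omega^2)^\star$ does not admit). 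By monotonicity, $\Gamma(\A) \subseteq \Gamma(\A + \B)$ and $\Gamma(\B) \subseteq \Gamma(\A + \B)$.

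Next I would invoke Proposition~\ref{prop:no-chains}: apply it with $\C = \omega^2$, $\mathcal{D} = (\omega^2)^\star$, taking the $\omega$-chains to be, say, the first column $x_i$ of $\Gamma(\A) \cong \omega^2$ and the first column $y_i$ of $\Gamma(\B) \cong \omega^2$ (both are genuine ascending $\omega$-chains inside their respective images). The proposition forces $\Gamma(\A + \B) \models \overline{x} <^\star \overline{y}$ or $\overline{y} <^\star \overline{x}$; that is, eventually every $x_i$ lies below every $y_j$, or vice versa. Combined with the fact that $\Gamma(\A + \B)$ is a suborder of $\omega^2$, this says that the two copies of $\omega^2$ sitting inside $\Gamma(\A + \B)$ are, up to finitely many elements, separated — one entirely to the left of the other. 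But a suborder of $\omega^2$ cannot contain two disjoint copies of $\omega^2$ one strictly above the other, because that would require a point of $\omega^2$ with infinitely many limit points below it, i.e.\ an element of ordinal height $\geq \omega^2$ inside $\omega^2$ — impossible. More carefully: the lower copy $\overline{x}$ already has order type $\omega^2$ below (a cofinal set of) the elements of the upper copy $\overline{y}$, so the elements $y_j$ would have to realize ordinal rank $\geq \omega$ repeatedly above a structure of rank $\omega$, giving an element of $\omega^2$ that is the supremum of an $\omega$-sequence of limit ordinals — which does not exist inside $\omega^2$. This is the contradiction.

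I expect the main obstacle to be the careful bookkeeping in the last step: one must argue rigorously that a linear order embeddable in $\omega^2$ cannot contain a suborder of type $\omega^2$ sitting (mod finite) entirely above another suborder of type $\omega^2$. The cleanest route is to use the Cantor–Bendixson / ordinal-rank analysis: in $\omega^2$ every element has finitely many limit points weakly below it, whereas an element in the ``upper'' copy $\overline{y}$ that is the $\omega$-th limit of its own copy would have infinitely many limit points below it coming from $\overline{x}$ alone, plus those from $\overline{y}$. Alternatively, one can phrase everything in terms of the successor-and-limit structure: $\omega^2$ has exactly one point (none, actually — it is a limit ordinal with no top) of "limit-of-limits" type, whereas $\overline{x} <^\star \overline{y}$ with both copies of type $\omega^2$ manufactures such points. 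I would present whichever formulation is shortest given the conventions already fixed in the paper; the rest of the argument (disjointness of domains, reduction of $\Gamma(\A+\B)$ to a suborder of $\omega^2$, the application of Proposition~\ref{prop:no-chains}) is routine once this combinatorial fact about $\omega^2$ is isolated as a lemma.
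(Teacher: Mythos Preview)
Your overall strategy matches the paper's: assume both $\Gamma(\A)\cong\omega^2$ and $\Gamma(\B)\cong\omega^2$, apply Proposition~\ref{prop:no-chains} to suitable $\omega$-chains in each image, and deduce that $\Gamma(\A+\B)$ contains a copy of $\omega^2\cdot 2$, which (after extending $\A+\B$ to a copy $\C$ of $\omega\cdot 3$) contradicts $\Gamma(\C)\cong\omega^2$.

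There is, however, a genuine gap at the crucial step. You feed into Proposition~\ref{prop:no-chains} ``the first column'' of each $\omega^2$ --- a single $\omega$-chain --- and then assert that the conclusion $\overline{x}<^\star\overline{y}$ means ``the two copies of $\omega^2$ sitting inside $\Gamma(\A+\B)$ are, up to finitely many elements, separated''. This does not follow. Proposition~\ref{prop:no-chains} only separates the two \emph{chosen chains}; it says nothing about the rest of $\Gamma(\A)$ and $\Gamma(\B)$. Two copies of $\omega^2$ can sit inside a suborder of $\omega^2$ completely interleaved (think of the even and odd positions), so the passage from ``$\overline{x}<^\star\overline{y}$'' to ``$\Gamma(\A)$ lies below $\Gamma(\B)$'' is unjustified, and your Cantor--Bendixson discussion is built on this unproven claim. (You also slip into writing ``the lower copy $\overline{x}$ already has order type $\omega^2$'', conflating the chain $\overline{x}$ with the whole image $\Gamma(\A)$.)

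The paper avoids this by choosing the chains more carefully: it takes $\overline{a}=(a_i)$ and $\overline{b}=(b_i)$ with $\Gamma(\A)\models a_i<^\infty a_{i+1}$ and $\Gamma(\B)\models b_i<^\infty b_{i+1}$ for all $i$ --- i.e., cofinal chains whose consecutive terms have infinitely many elements between them. By monotonicity these $<^\infty$ relations persist in $\Gamma(\A+\B)$, so once Proposition~\ref{prop:no-chains} yields (say) $\overline{a}<^\star\overline{b}$, one reads off an embedded $\omega^2\cdot 2$ directly: the $a_i$'s witness one copy of $\omega^2$, the $b_j$'s another, sitting eventually above. No separation of the full images and no rank analysis is needed. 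The repair to your argument is exactly this: choose the $x_i$'s and $y_i$'s with infinitely many elements between consecutive terms, and the contradiction becomes one line.
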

\begin{proof}
  Assume that there are at least two copies $\A$ and $\B$ of $\omega$, with mutually disjoint domains, such that
  $\Gamma(\A) \cong \omega^2$ and $\Gamma(\B) \cong \omega^2$.
  Then we can fix the infinite sequences $\overline{a} = {(a_i)}^\infty_{i=0}$ and $\overline{b} = {(b_i)}^\infty_{i=0}$ such that
  \begin{align*}
    & \Gamma(\A) \models a_0 <^\infty a_1 <^\infty a_2 <^\infty \cdots \\
    & \Gamma(\B) \models b_0 <^\infty b_1 <^\infty b_2 <^\infty \cdots
  \end{align*}
  Then by Proposition~\ref{prop:no-chains}, we have $\Gamma(\A + \B) \models \overline{a} <^\star \overline{b}\ \lor\ \overline{b} <^\star \overline{a}$.
  It follows that $\Gamma(\A + \B)$ extends a copy of $\omega^2 \cdot 2$, which is a contradiction
  because by monotonicity of enumeration operators this would mean that there is a copy $\C$ of $\omega \cdot 3$
  extending $\A + \B$ such that $\Gamma(\C)$ extends $\omega^2 \cdot 2$.
\end{proof}

From now on, in this section, we suppose that we work with copies $\A$ of $\omega$
such that $\Gamma(\A)$ has type \emph{strictly less} than $\omega^2$, i.e.
there exist natural numbers $n$ and $\ell$ such that $\Gamma(\A) \cong \omega \cdot n + \ell$.

\begin{proposition}\label{prop:upper-bound}
  There exists an infinite subset $D$ of natural numbers and a number $n$
  such that any copy $\A$ of $\omega$ with $\dom(\A) \subseteq D$ is such that $\Gamma(\A)$ has type at most $\omega \cdot n$.
\end{proposition}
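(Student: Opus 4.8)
The plan is to argue by contradiction using a compactness/tree argument on the enumeration operator $\Gamma$, exploiting the fact that the output order $\{\omega^2, (\omega^2)^\star\}$ has limited "height." Suppose the conclusion fails: for every infinite set $D$ and every $n$, there is a copy $\A$ of $\omega$ with $\dom(\A)\subseteq D$ such that $\Gamma(\A)$ has type strictly greater than $\omega\cdot n$ (but, by our standing assumption in this section, still strictly less than $\omega^2$, hence of the form $\omega\cdot m + \ell$ with $m>n$). The key point I would extract first is a \emph{finitary, monotone} version of this: since $\Gamma$ is an enumeration operator, if some copy $\A$ of $\omega$ has $\Gamma(\A)$ of type $\geq \omega\cdot n$, then already a \emph{finite} subordering $\A_0\subseteq \A$ forces $n$ pairwise points $x_1<^\infty x_2 <^\infty \cdots$ with sufficiently many witnessed intermediate elements to guarantee, in every copy of $\omega$ extending $\A_0$, a suborder of type at least $\omega\cdot n$ in the image (using the compactness argument from the start of the section that the order between enumerated points is already fixed).

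The main step is then to build, by a back-and-forth over a partition of $\mathbb{N}$ into infinitely many infinite pieces $D_0, D_1, D_2, \dots$, a single copy $\A$ of $\omega$ whose domain meets infinitely many $D_j$'s in such a way that $\Gamma(\A)$ contains, cofinally, arbitrarily long finite blocks of the form "$\omega\cdot n$"—more precisely, a sequence of points $z_0 <^\infty z_1 <^\infty z_2 <^\infty \cdots$ witnessing type $\geq \omega^2$, or else an infinite descending chain. Concretely: using the negation of the proposition, for each $n$ pick on the piece $D_n$ a finite configuration $\A_n$ with $\Gamma(\A_n)$ of type $\geq \omega\cdot n$; this gives $n$ points $x^{(n)}_1 <^\infty \cdots <^\infty x^{(n)}_n$ in $\Gamma(\A_n)$. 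Arrange the finite pieces $\A_n$ consecutively (each embedded into a fresh copy of $\omega$, say $\A_n$ placed on an initial segment and then extended to a full $\omega$ inside a larger reservoir) to obtain an increasing chain of finite orders whose union $\A$ is a copy of $\omega$. By monotonicity, $\Gamma(\A)\supseteq \bigcup_n \Gamma(\A_n)$, and by Proposition~\ref{prop:no-chains} applied to consecutive blocks (viewing $\A$ as split into two $\omega$-halves whenever convenient), the $<^\infty$-chains from different $\A_n$ must be linearly comparable in a $<^\star$ sense, so they stack up: $\Gamma(\A)$ embeds a copy of $\omega\cdot n$ above a copy of $\omega\cdot m$ for all $m,n$, hence embeds $\omega^2$. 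This contradicts the standing assumption that $\Gamma(\A)\cong \omega\cdot k+\ell$ for some finite $k,\ell$ whenever $\A$ is such a copy of $\omega$; alternatively, if we did not assume that, we are back in the situation of Proposition~\ref{prop:omega-not-2} / the case $\Gamma(\A)\cong\omega^2$, handled already.

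The hard part will be the bookkeeping that makes the stacking rigorous: ensuring that when we glue the finite pieces $\A_n$ into one copy of $\omega$, the images $\Gamma(\A_n)$ genuinely appear as an increasing sequence of blocks in $\Gamma(\A)$ rather than getting "interleaved" or collapsed, and that the $<^\infty$ relations are preserved (not just $<$) so that we really get type $\geq \omega\cdot n$ inside each block of the final image. This is where Proposition~\ref{prop:no-chains} does the essential work: it forbids the images of two disjoint $\omega$-copies (here, two disjoint finite chunks each extendable to $\omega$) from being $<^\star$-incomparable, which is exactly what rules out the pathological interleaving and forces the linear stacking. One technical subtlety is that Proposition~\ref{prop:no-chains} is stated for $\{\omega\cdot 2,\omega^\star\cdot 2\}\leq_c\{\C,\D\}$; I would first note that restricting $\Gamma$ to the sub-pair $\{\omega\cdot 2,\omega^\star\cdot 2\}$ of $\{\omega\cdot 3,\omega^\star\cdot 3\}$ (by fixing one summand) yields such an embedding, so the proposition applies with $\C=\omega^2$ and $\D=(\omega^2)^\star$. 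Once the stacking is in place, the contradiction with "type $<\omega^2$" is immediate, and a standard König's-lemma-style choice of the successful infinite subset $D$ completes the argument.
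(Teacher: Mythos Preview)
Your argument has a genuine gap at the ``finitary, monotone version'' step. You claim that if $\Gamma(\A)$ has type $\geq \omega\cdot n$ for a full copy $\A$ of $\omega$, then some \emph{finite} subordering $\A_0\subseteq\A$ already forces $n$ points $x_1<^\infty\cdots<^\infty x_n$ in every $\omega$-extension. This is false: the relation $x_i<^\infty x_{i+1}$ is genuinely infinitary, and a finite piece can only witness finitely many intermediate points. There is no reason an arbitrary $\omega$-extension of $\A_0$ must add infinitely many more elements between $x_i$ and $x_{i+1}$. Consequently, when you glue the finite pieces $\A_n$ \emph{consecutively} into one copy of $\omega$, you have no control over the order type of the image, and the appeal to Proposition~\ref{prop:no-chains} does not repair this: that proposition compares $\omega$-chains coming from two \emph{full} disjoint copies of $\omega$ inside $\Gamma(\A+\B)$, where $\A+\B$ has type $\omega\cdot 2$, not $\omega$; it tells you nothing about the image of a single copy of $\omega$ built from finite blocks.

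The paper's proof avoids this entirely with a simple interleaving trick you are missing. From the negation one obtains full copies $\A_n$ of $\omega$, $n\in\omega$, on pairwise disjoint domains, with $\Gamma(\A_n)$ of type at least $\omega\cdot n$. Partition each $\A_n$ into finite intervals $\A_n=\sum_{i\in\omega}\alpha_{n,i}$ and form the diagonal sum
\[
  \B \;=\; \sum_{k\in\omega}\sum_{n=0}^{k}\alpha_{n,k-n}
  \;=\; \alpha_{0,0}+\alpha_{0,1}+\alpha_{1,0}+\alpha_{0,2}+\alpha_{1,1}+\alpha_{2,0}+\cdots.
\]
This $\B$ is a single copy of $\omega$, and for each fixed $n$ the blocks $\alpha_{n,0},\alpha_{n,1},\ldots$ occur in $\B$ in the same order as in $\A_n$, so $\A_n\subseteq\B$ as linear orders. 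Monotonicity now gives $\Gamma(\A_n)\subseteq\Gamma(\B)$ for every $n$, hence $\Gamma(\B)$ has type $\geq\omega\cdot n$ for all $n$, i.e.\ type $\geq\omega^2$, contradicting the standing assumption. No use of Proposition~\ref{prop:no-chains}, no stacking argument, no finitary extraction is needed.
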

\begin{proof}
  Towards a contradiction, assume that for any infinite subset $D$ of natural numbers, for any $n$, there exists a copy $\A$ of $\omega$ with $\dom(\A) \subseteq D$ such that $\Gamma(\A)$ is \emph{at least} $\omega \cdot n$.
  This means that we can consider a sequence $\A_n$ of copies of $\omega$, with mutually disjoint domains, such that
  $\Gamma(\A_n)$ has type \emph{at least} $\omega \cdot n$.
  Now we can partition each copy $\A_n$ into an infinite sum of finite parts ${(\alpha_{n,i})}^\infty_{i=0}$ such that $\A_n = \sum_{i\in\omega} \alpha_{n,i}$.
  Then we can form a new copy $\B$ of $\omega$ in the following way:
  \[\B = \sum_{i\in\omega} \sum^i_{n=0} \alpha_{n,i-n}.\]
  In other words, $\B = \alpha_{0,0} + \alpha_{0,1} + \alpha_{1,0} + \alpha_{0,2} + \alpha_{1,1} + \alpha_{2,0} + \alpha_{0,3} + \cdots$
  Then $\B$ contains $\A_n$ for all $n$ and by monotonicity,
  $\Gamma(\B)$ has type greater than $\omega\cdot n$ for all $n$.
  We conclude that $\Gamma(\B)$ has type at least $\omega^2$, which is a contradiction.
\end{proof}

\begin{remark}\label{remark:omega-fixed}
  Proposition~\ref{prop:upper-bound} allows us to proceed as in Section 7 of~\cite{BazhGanVat-arxiv} and suppose that we have fixed an infinite set $D$ and a number $n$
  such that any copy $\A$ of $\omega$ with $\dom(\A) \subseteq D$ is such that
  $\Gamma(\A) \cong \omega \cdot n$.
  From here on, all copies of $\omega$ that we consider will have as domains coinfinite subsets of $D$.
\end{remark}

\begin{proposition}\label{prop:last-chains-order}
  Let $\A$ and $\B$ be two such copies of $\omega$, with mutually disjoint domains, such that
  for the $\omega$-chains $\overline{a}_i = {(a_{i,j})}^\infty_{j=0}$ and $\overline{b}_i = {(b_{i,j})}^\infty_{j=0}$, where $i = 1,\dots,n$, we have
  % $\Gamma(\A) \models \overline{a}_1 < \overline{a}_2 < \cdots < \overline{a}_n$ and $\Gamma(\B) \models \overline{b}_1 < \overline{b}_2 < \cdots < \overline{b}_n$.
  \begin{align*}
    & \Gamma(\A) \models \overline{a}_1 < \overline{a}_2 < \cdots < \overline{a}_n\\
    & \Gamma(\B) \models \overline{b}_1 < \overline{b}_2 < \cdots < \overline{b}_n.
  \end{align*}
  Then
  \[\Gamma(\A+\B) \models \overline{a}_n <^\star \overline{b}_n.\]
\end{proposition}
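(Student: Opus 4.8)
The plan is to mimic the argument of Proposition~\ref{prop:omega-not-2}, but now applied to the \emph{last} chains $\overline{a}_n$ and $\overline{b}_n$ rather than to full copies of $\omega^2$. First I would observe that by Remark~\ref{remark:omega-fixed} both $\Gamma(\A)$ and $\Gamma(\B)$ are copies of $\omega\cdot n$, so the chains $\overline{a}_1<\cdots<\overline{a}_n$ (resp.\ $\overline{b}_1<\cdots<\overline{b}_n$) lie in the $n$ successive $\omega$-blocks of $\Gamma(\A)$ (resp.\ $\Gamma(\B)$); in particular $\overline{a}_n$ is cofinal in $\Gamma(\A)$ and $\overline{b}_n$ is cofinal in $\Gamma(\B)$. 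Since $\A$ and $\B$ are both copies of $\omega$ with disjoint domains contained in $D$, the orderings $\A+\B$ and $\B+\A$ are both copies of $\omega\cdot 2$, each extendable to a copy of $\omega\cdot 3$ inside $D$ (adjoin one more copy of $\omega$ on coinfinitely many remaining points of $D$).

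Next I would apply Proposition~\ref{prop:no-chains} — with the roles of $\C,\D$ played by $\omega^2,(\omega^2)^\star$ and using that $\Gamma$ restricted to $\{\omega\cdot2,\omega^\star\cdot2\}$ is a computable embedding into $\{\omega^2,(\omega^2)^\star\}$ (which it is, by monotonicity/restriction of our $\Gamma$, since copies of $\omega\cdot2$ embed into copies of $\omega\cdot3$) — to the chains $\overline{a}_n$ and $\overline{b}_n$. This yields
\[\Gamma(\A+\B)\models \overline{a}_n<^\star \overline{b}_n\ \lor\ \overline{b}_n<^\star \overline{a}_n.\]
It remains to rule out the second disjunct. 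Suppose for contradiction that $\Gamma(\A+\B)\models \overline{b}_n<^\star\overline{a}_n$. By monotonicity $\Gamma(\A)\subseteq\Gamma(\A+\B)$ and $\Gamma(\B)\subseteq\Gamma(\A+\B)$, so inside $\Gamma(\A+\B)$ we see: the $n$ blocks of $\Gamma(\A)$ with $\overline{a}_n$ cofinal, the $n$ blocks of $\Gamma(\B)$ with $\overline{b}_n$ cofinal, and now the whole $\overline{b}_n$ pushed (eventually) strictly below the whole $\overline{a}_n$. I would argue this forces $\Gamma(\A+\B)$ to contain an ascending chain of order type at least $\omega\cdot(2n-1)$ — or more carefully, that $\overline{b}_1<\cdots<\overline{b}_n<^\star \overline{a}_n$ already exhibits $n+1$ successive $\omega$-blocks stacked in increasing order (the $\overline{b}_i$ for $i<n$ live below $\overline{b}_n$ which is $<^\star$ below $\overline{a}_n$, and $\overline{a}_n$ is itself an $\omega$-chain), hence $\Gamma(\A+\B)$ extends a copy of $\omega\cdot(n+1)$. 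Extending $\A+\B$ to a copy $\C$ of $\omega\cdot 3$ inside $D$, monotonicity gives that $\Gamma(\C)$ extends a copy of $\omega\cdot(n+1)$; but $\Gamma(\C)$ must be a copy of $\omega^2$ or $(\omega^2)^\star$, and since $\C$ is well-ordered one checks (as in Proposition~\ref{prop:omega-not-2}) it must be $\omega^2$ — fine, that is consistent with $\omega\cdot(n+1)$, so the contradiction must instead come from pairing the two cofinal chains across \emph{both} orders.

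The honest version of the last step, and the main obstacle, is therefore the bookkeeping of order types: I need $\overline{b}_n<^\star\overline{a}_n$ to be genuinely incompatible with $\Gamma(\A+\B)$ being extendable to $\omega^2$. The right way is to use the full structure: from $\overline{a}_1<\cdots<\overline{a}_n$ together with $\overline{b}_n<^\star\overline{a}_n$ we get, inside $\Gamma(\A+\B)$, the chains $\overline{b}_1<\cdots<\overline{b}_n<^\star\overline{a}_1<^\star\cdots<^\star\overline{a}_n$ — wait, that requires $\overline{b}_n<^\star\overline{a}_1$, which I do \emph{not} get for free. So instead I would run the symmetric argument: by the disjunction one of $\overline{a}_n<^\star\overline{b}_n$ or $\overline{b}_n<^\star\overline{a}_n$ holds in $\Gamma(\A+\B)$; applying the same reasoning to $\Gamma(\B+\A)$ gives the analogous disjunction there; and by the "relation already fixed" principle established at the start of Section~\ref{sect:case-3-2}, the truth value of $\overline{a}_n<^\star\overline{b}_n$ in any common extension of $\A+\B$ and $\B+\A$ (e.g.\ $\A+\B+\A'$ or a shuffle) is the same, so it cannot be that $\Gamma(\A+\B)\models\overline{b}_n<^\star\overline{a}_n$ while the natural order of blocks in some copy of $\omega\cdot 3$ extending both wants $\overline{a}_n$ below a fresh copy's chain above $\overline{b}_n$ — yielding $\omega\cdot(n+1)$ again and contradicting that the output of a well-order is exactly $\omega^2$ with its blocks in the order inherited from the input. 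I expect the clean write-up to phrase this as: $\overline{b}_n<^\star\overline{a}_n$ would make $\Gamma(\A)$'s top block sit strictly above all of $\Gamma(\B)$, so in a copy of $\omega\cdot 3$ obtained as $\B+\A+(\text{new }\omega)$ the images would be forced into an order of type exceeding $\omega^2$, contradiction; hence only $\overline{a}_n<^\star\overline{b}_n$ survives.
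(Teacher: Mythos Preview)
Your first step is right: Proposition~\ref{prop:no-chains} gives the disjunction, and the task is to rule out $\overline{b}_n<^\star\overline{a}_n$. But every attempt you make after that fails for the reason you yourself notice midway through: extending $\A+\B$ to a copy of $\omega\cdot 3$ lands you in $\omega^2$, which happily contains copies of $\omega\cdot(n+1)$, $\omega\cdot(2n-1)$, or anything of that shape. No arrangement of the $\overline{a}_i$'s and $\overline{b}_i$'s inside $\Gamma$ of an $\omega\cdot 3$ copy will ``exceed $\omega^2$'', and your symmetric and shuffle arguments do not change this. The claim that the output of $\B+\A+(\text{new }\omega)$ would have type exceeding $\omega^2$ is simply false.

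The missing idea is to go \emph{down} rather than up: use compactness of the enumeration operator to find a \emph{finite} initial segment $\alpha\subseteq\A$ with $a_{n,0}\in\Gamma(\alpha)$. Then $\alpha+\B$ is a copy of $\omega$ (not $\omega\cdot 2$ or $\omega\cdot 3$), so by Remark~\ref{remark:omega-fixed} its image under $\Gamma$ has type exactly $\omega\cdot n$. But monotonicity puts all of $\overline{b}_1<\cdots<\overline{b}_n$ into $\Gamma(\alpha+\B)$, and also $a_{n,0}$; from $\overline{b}_n<^\star\overline{a}_n$ (which persists by monotonicity from $\Gamma(\A+\B)$ down to any common extension, and here $a_{n,0}$ already lies above cofinitely many $b_{n,j}$) one gets $\overline{b}_1<\cdots<\overline{b}_n<a_{n,0}$ inside $\Gamma(\alpha+\B)$, i.e.\ a copy of $\omega\cdot n+1$. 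That contradicts $\Gamma(\alpha+\B)\cong\omega\cdot n$. This is the paper's argument, and it is the only place where the tight bound from Remark~\ref{remark:omega-fixed} actually bites.
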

\begin{proof}
  Assume not. By Proposition~\ref{prop:no-chains} we would have
  $\Gamma(\A+\B) \models \overline{b}_n <^\star \overline{a}_n$.
  Let $a_{n,0} \in \Gamma(\alpha)$ for some finite part $\alpha$ of $\A$.
  Then $\C = \alpha +\B$ is a copy of $\omega$ such that
  \[\Gamma(\C) \models \overline{b}_1 < \overline{b}_2 < \cdots < \overline{b}_n < a_{n,0}.\]
  It follows that $\Gamma(\C)$ extends a copy of $\omega \cdot n + 1$, 
  which is a contradiction with Remark~\ref{remark:omega-fixed}.
\end{proof}

\begin{proposition}\label{prop:no-merge-chains}
  Let $\A$, $\B$, and $\C$ be copies of $\omega$.
  Suppose that 
  \[\Gamma(\C) \models \overline{c}_1 < \overline{c}_2 < \cdots < \overline{c}_n,\]
  where $\overline{c}_i = {(c_{i,j})}^\infty_{j=0}$ are $\omega$-chains.
  Then there exists an infinite subsequence ${(i_s)}^\infty_{s=0}$  such that 
  \[\Gamma(\A + \B + \C) \models \bigwedge_{s \in \omega} c_{n,i_s} <^\infty c_{n,i_{s+1}}.\]
\end{proposition}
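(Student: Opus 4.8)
The goal is to show that when we add three copies of $\omega$ together, the "last chain" $\overline{c}_n$ of $\Gamma(\C)$ becomes, after the embedding, a genuine $\omega^2$-like configuration inside $\Gamma(\A+\B+\C)$: infinitely many of the $c_{n,j}$ must be separated by infinitely many elements. The intuition is that $\Gamma(\A+\B+\C)$ has order type $\omega^2$ (it is $\Gamma$ applied to a copy of $\omega\cdot 3$, hence a linear order of type $\le \omega^2$, and since it extends three copies of $\omega\cdot n$ glued in sequence it must be exactly $\omega^2$), and the tail of $\Gamma(\C)$ sits cofinally in it, so its elements cannot all lie within finitely many $\omega$-blocks.

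First I would record the structural facts. By the standing assumptions (Remark~\ref{remark:omega-fixed}), $\Gamma(\A)$, $\Gamma(\B)$, $\Gamma(\C)$ are each copies of $\omega\cdot n$; by monotonicity $\Gamma(\A+\B+\C)$ extends all three, and being $\Gamma$ of a copy of $\omega\cdot 3$ it is a linear order of type $\le \omega^2$; since it contains $\Gamma(\C)$ (type $\omega\cdot n$) as a final segment pushed arbitrarily far to the right by the earlier blocks — more precisely, applying Proposition~\ref{prop:last-chains-order} twice gives $\overline{a}_n<^\star\overline{b}_n$ and $\overline{b}_n<^\star\overline{c}_n$, so the $\overline{c}_n$-chain is cofinal — the type of $\Gamma(\A+\B+\C)$ is exactly $\omega^2$. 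Now in a copy of $\omega^2$, consider the sequence $(c_{n,j})_{j\in\omega}$, which is cofinal and strictly increasing. I claim infinitely many consecutive gaps $c_{n,j}<^\infty c_{n,j+1}$ must be infinite: otherwise, from some point $j_0$ on, every gap $[c_{n,j},c_{n,j+1}]$ is finite, so the final segment above $c_{n,j_0}$ would be a union of an $\omega$-sequence of finite intervals, hence of type $\omega$ — contradicting that a final segment of $\omega^2$ (which is itself $\le \omega^2$ and cofinal) cannot have type $\omega$ unless the whole order is $\le \omega\cdot m$ for some finite $m$, which it is not. Extracting the indices where the gap is infinite gives the desired subsequence $(i_s)$ with $c_{n,i_s}<^\infty c_{n,i_{s+1}}$ for all $s$.

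The one delicate point — and the main obstacle — is justifying that these facts hold \emph{in $\Gamma(\A+\B+\C)$} rather than merely in some abstract copy of $\omega^2$; that is, the sentences $c_{n,i_s}<^\infty c_{n,i_{s+1}}$ are computable infinitary sentences that must be forced by $\Gamma$ on the actual input $\A+\B+\C$. Here I would argue as in the proof of Proposition~\ref{prop:omega-not-2} and Proposition~\ref{prop:last-chains-order}: the relation "$<^\infty$" between two named elements is, once $\Gamma(\A+\B+\C)$ is known to be a linear order of type $\omega^2$, \emph{decided} — either there are infinitely many elements between them in every copy of $\omega\cdot 3$ extending $\A+\B+\C$, or there are only finitely many in all of them (the "already fixed" phenomenon discussed at the start of Section~\ref{sect:case-3-2}). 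So "infinitely many consecutive infinite gaps" is a property of the genuine output, and if it failed we could, exactly as above, exhibit an extension to $\omega\cdot 3$ whose image has type $<\omega^2$ or $>\omega^2$, contradicting the allowed output class. I expect the write-up to consist of: (i) the order-type computation for $\Gamma(\A+\B+\C)$ via two applications of Proposition~\ref{prop:last-chains-order}, (ii) the combinatorial observation about cofinal sequences in $\omega^2$, and (iii) the "fixing" argument transferring (ii) into a statement about $\Gamma$ itself — with (iii) being where the enumeration-operator machinery (compactness and monotonicity) does the real work.
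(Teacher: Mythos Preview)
Your overall strategy---show $\Gamma(\A+\B+\C)\cong\omega^2$, show $\overline{c}_n$ is cofinal in it, then use the combinatorics of cofinal sequences in $\omega^2$---is sound, but the argument you give for cofinality does not work. Applying Proposition~\ref{prop:last-chains-order} twice yields $\overline{a}_n<^\star\overline{b}_n$ and $\overline{b}_n<^\star\overline{c}_n$ inside $\Gamma(\A+\B+\C)$, but this only tells you that $\overline{c}_n$ eventually dominates the last chains coming from $\Gamma(\A)$ and $\Gamma(\B)$. It says nothing about elements of $\Gamma(\A+\B+\C)$ that are \emph{not} in $\Gamma(\A)\cup\Gamma(\B)\cup\Gamma(\C)$; such elements certainly exist (the operator is not just a union), and a priori some of them could sit above every $c_{n,j}$. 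So the inference ``$\overline{a}_n<^\star\overline{b}_n<^\star\overline{c}_n$, hence $\overline{c}_n$ is cofinal'' is the gap.

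The paper closes exactly this gap, and in fact this is the entire content of the proof: assume the conclusion fails, so $\overline{c}_n$ has only finitely many infinite gaps in $\Gamma(\A+\B+\C)\cong\omega^2$ and is therefore bounded above; pick any $d$ above all of $\overline{c}_n$; by compactness $d\in\Gamma(\alpha+\beta+\C)$ for some finite $\alpha\subseteq\A$, $\beta\subseteq\B$; but $\alpha+\beta+\C$ is a copy of $\omega$ and $\Gamma(\alpha+\beta+\C)$ now contains $\overline{c}_1<\cdots<\overline{c}_n<d$, hence has type at least $\omega\cdot n+1$, contradicting Remark~\ref{remark:omega-fixed}. This single compactness step simultaneously establishes cofinality and the conclusion, so your separate step~(ii) and the combinatorial step are subsumed. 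Finally, your step~(iii) is a non-issue: $\Gamma(\A+\B+\C)$ is a concrete linear order, and a sentence like $c<^\infty c'$ is simply true or false in it---there is nothing to ``transfer''. The real work with compactness and monotonicity is not in step~(iii) but in proving the cofinality you assumed in step~(ii).
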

\begin{proof}
  Assume not. Then
  $\Gamma(\A+\B+\C) \models \overline{c}_1 < \cdots < \overline{c}_n + \mathcal{D}$,
  where $\D$ has the type of $\omega^2$.
  Let $d \in \D$ be such that $d \in \Gamma(\alpha + \beta + \C)$, where
  $\alpha$ and $\beta$ are finite parts of $\A$ and $\B$ respectively.
  Then $\alpha+\beta+\C$ is a copy of $\omega$, but 
  $\Gamma(\alpha+\beta+\C)$ extends a copy of $\omega \cdot n + 1$, which is a contradiction
  with Remark~\ref{remark:omega-fixed}.
\end{proof}

\begin{proposition}\label{prop:omega-3-omega-2}
  For any linear ordering $\mathcal{L}$ of type $\omega\cdot 3$,
  there is a linear ordering $\mathcal{M}$ of type $\omega \cdot 2$ with $\dom(\mathcal{L}) = \dom(\mathcal{M})$
  and $\Gamma(\mathcal{M}) \cong \omega^2$.
\end{proposition}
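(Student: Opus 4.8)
The plan is to start with an arbitrary $\mathcal{L}$ of type $\omega\cdot 3$, write it as $\mathcal{L}=\mathcal{L}_1+\mathcal{L}_2+\mathcal{L}_3$ with each $\mathcal{L}_i$ a copy of $\omega$, and produce $\mathcal{M}$ by keeping $\dom(\mathcal{M})=\dom(\mathcal{L})$ but rearranging the order so that it has type $\omega\cdot 2$, in such a way that $\Gamma(\mathcal{M})\cong\omega^2$. The natural candidate is to interleave two of the three blocks into a single copy of $\omega$ while leaving the third as is — i.e.\ form $\mathcal{M}=\mathcal{N}+\mathcal{L}_3$, where $\mathcal{N}$ is a copy of $\omega$ built by interleaving (finite parts of) $\mathcal{L}_1$ and $\mathcal{L}_2$. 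Since $\mathcal{N}$ has domain a subset of $D$, Remark~\ref{remark:omega-fixed} gives $\Gamma(\mathcal{N})\cong\omega\cdot n$, and similarly we may arrange that $\mathcal{L}_3$ (or a copy of $\omega$ on its domain) maps under $\Gamma$ to something of type $\omega\cdot n$ as well. So $\mathcal{M}$ is a copy of $\omega\cdot 2$, and we need to show $\Gamma(\mathcal{M})\cong\omega^2$.

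The key is to control the top chain. By hypothesis $\Gamma(\mathcal{N})\models \overline{a}_1<\overline{a}_2<\cdots<\overline{a}_n$ and $\Gamma$ applied to a copy of $\omega$ on $\dom(\mathcal{L}_3)$ satisfies $\overline{b}_1<\cdots<\overline{b}_n$. First I would apply Proposition~\ref{prop:last-chains-order} to get $\Gamma(\mathcal{M})\models \overline{a}_n<^\star\overline{b}_n$, so in $\Gamma(\mathcal{M})$ the $\overline b$-blocks sit cofinally above the $\overline a$-blocks and everything else. Then I would apply Proposition~\ref{prop:no-merge-chains} (with the roles of $\A,\B,\C$ filled so that $\mathcal{C}$ carries the $\overline b_i$ and the other two carry the pieces that got absorbed into $\mathcal{N}$) to extract an infinite subsequence ${(i_s)}$ with $\Gamma(\mathcal{M})\models\bigwedge_s b_{n,i_s}<^\infty b_{n,i_{s+1}}$. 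This says the topmost $\omega$-block of $\Gamma(\mathcal{M})$ actually contains an $\omega^2$-like cofinal chain, forcing the order type of $\Gamma(\mathcal{M})$ to be at least $\omega^2$. Since $\mathcal{M}$ is a copy of $\omega\cdot 2\subseteq\omega\cdot 3$, monotonicity together with the fact that $\Gamma$ maps copies of $\omega\cdot 3$ to copies of $\omega^2$ (never anything larger, as $\{\omega\cdot 3,\omega^\star\cdot 3\}\leq_c\{\omega^2,(\omega^2)^\star\}$) bounds $\Gamma(\mathcal{M})$ above by $\omega^2$. Hence $\Gamma(\mathcal{M})\cong\omega^2$.

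The main obstacle I anticipate is the bookkeeping needed to make the interleaving $\mathcal{N}$ simultaneously (i) a genuine copy of $\omega$ whose domain is a coinfinite subset of $D$, so that Remark~\ref{remark:omega-fixed} applies and $\Gamma(\mathcal{N})\cong\omega\cdot n$ exactly; (ii) arranged so that the given $\omega$-chains $\overline a_i$ in $\Gamma(\mathcal{N})$ and $\overline b_i$ on $\dom(\mathcal{L}_3)$ are actually \emph{realized} — i.e.\ choosing the finite parts $\alpha_{n,i}$ of each block large enough that the relevant order facts about the $a_{i,j}$'s and $b_{i,j}$'s have already been enumerated by $\Gamma$; and (iii) compatible with the disjointness hypotheses of Propositions~\ref{prop:last-chains-order} and~\ref{prop:no-merge-chains}. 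Concretely, one has to be slightly careful that when applying Proposition~\ref{prop:no-merge-chains} the decomposition $\mathcal{M}=\A+\B+\mathcal{C}$ with $\mathcal{C}$ a copy of $\omega$ is legitimate; since $\mathcal{L}_3$ is already a copy of $\omega$ on its domain and, after possibly re-indexing, $\Gamma$ on it has type $\omega\cdot n$, this is fine, but it should be spelled out. Once these finitary approximation details are handled, the argument is just a combination of the three preceding propositions plus the overall upper bound coming from the assumed embedding.
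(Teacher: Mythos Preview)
Your outline has the right overall shape --- interleave two of the three blocks and keep the third --- but there is a genuine gap at the point where you invoke Proposition~\ref{prop:no-merge-chains}. Applied with $\A=\mathcal{L}_1$, $\B=\mathcal{L}_2$, $\C=\mathcal{L}_3$, that proposition yields a subsequence satisfying $<^\infty$ inside $\Gamma(\mathcal{L}_1+\mathcal{L}_2+\mathcal{L}_3)=\Gamma(\mathcal{L})$, \emph{not} inside $\Gamma(\mathcal{M})$. The orders $\mathcal{L}$ and $\mathcal{M}$ share a domain but neither is a substructure of the other (in $\mathcal{L}$ all of $\mathcal{L}_1$ precedes all of $\mathcal{L}_2$; in $\mathcal{M}$ they are interleaved), so monotonicity cannot transfer the $<^\infty$ facts across. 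Nor can you apply the proposition directly to $\mathcal{M}$: it has type $\omega\cdot 2$, so it admits no decomposition $\A+\B+\C$ into three copies of $\omega$. Your use of Proposition~\ref{prop:last-chains-order} is valid but only gives $\overline a_n<^\star\overline b_n$, which produces no $<^\infty$ gaps. Consequently nothing in your sketch forces $\Gamma(\mathcal{M})\geq\omega^2$ for a generic interleaving $\mathcal{N}$, and your bookkeeping items (i)--(iii) do not touch this point.

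The paper closes exactly this hole by making the interleaving do the work. After extracting the chain $(c_i)\subseteq\Gamma(\mathcal{L}_3)$ with $c_i<^\infty c_{i+1}$ holding in $\Gamma(\mathcal{L})$, one observes (via Remark~\ref{remark:omega-fixed}) that for any finite pieces $\alpha\subset\mathcal{L}_1$, $\beta\subset\mathcal{L}_2$ the structure $\alpha+\beta+\mathcal{L}_3$ is a copy of $\omega$, so $\Gamma(\alpha+\beta+\mathcal{L}_3)$ contributes only finitely many points to each interval $(c_i,c_{i+1})$. One then builds the partitions $\mathcal{L}_1=\sum_i\alpha_i$ and $\mathcal{L}_2=\sum_i\beta_i$ stage by stage so that $\Gamma(\alpha_i+\beta_i+\mathcal{L}_3)$ already places a fresh witness $u_{j,i-j}$ into each of the first $i{+}1$ intervals. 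The crucial point is that each $\alpha_i+\beta_i+\mathcal{L}_3$ is a substructure of $\mathcal{M}=\sum_i(\alpha_i+\beta_i)+\mathcal{L}_3$ as well, so monotonicity now legitimately pushes all the $u_{j,k}$ into $\Gamma(\mathcal{M})$, yielding $c_j<^\infty c_{j+1}$ there for every $j$. This tailored construction of $\mathcal{N}$ is the missing idea.
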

\begin{proof}
  Let $\mathcal{L} = \A + \B + \C$, where $\A$, $\B$, and $\C$ are copies of $\omega$.
  By Proposition~\ref{prop:no-merge-chains}, consider the infinite sequence $\overline{c} \in \Gamma(\C)$ such that
  \[\Gamma(\A + \B + \C) \models \bigwedge_{i\in\omega} c_i <^\infty c_{i+1}.\]
  Assume that for some finite parts $\alpha$ and $\beta$ of $\A$ and $\B$ respectively, for some $i$,
  $\Gamma(\alpha + \beta + \C) \models c_i <^\infty c_{i+1}$.
  But since $\alpha + \beta + \C$ is a copy of $\omega$, and $\Gamma(\C) \cong \omega \cdot n$,
  then $\Gamma(\alpha + \beta + \C)$ would extend a copy of $\omega \cdot (n+1)$, which is a contradiction with Remark~\ref{remark:omega-fixed}.
  It follows that any such finite parts $\alpha$ and $\beta$
  contribute finitely many elements to any interval of the form $(c_i,c_{i+1})$.
  
  Let $\overline{u}_i = {(u_{i,j})}^\infty_{j=0}$ be $\omega$-chains such that we can
  partition $\A$ and $\B$ into finite parts such that
  $\A = \sum_{i\in\omega} \alpha_i $ and $\B = \sum_{i\in\omega} \beta_i$ and for all $i$,
  \[\Gamma(\alpha_i+\beta_i+\C) \models \bigwedge^i_{j=0} c_j < u_{j,i-j} < c_{j+1}.\]
  Then, by monotonicity, we obtain the following:
  \[\Gamma(\sum_{i\in\omega}(\alpha_i+\beta_i) + \C) \models \bigwedge_{i\in\omega}\bigwedge_{j\in\omega}c_i < u_{i,j}< c_{i+1}.\]
  It follows that $\mathcal{M} = \sum_{i\in\omega}(\alpha_i+\beta_i) + \C$ is a copy of $\omega \cdot 2$ with $\dom(\mathcal{M}) =
  \dom(\mathcal{L})$ which produces a copy of $\omega^2$.
  
\end{proof}

\begin{proposition}\label{prop:strict-growth}
  Let $\mathcal{L}$ and $\mathcal{M}$ be disjoint copies of $\omega \cdot 2$ such that $\Gamma(\mathcal{L}) \cong \omega^2$ and $\Gamma(\mathcal{M}) \cong \omega^2$.
  Then there is a copy $\mathcal{N}$ of $\omega \cdot 3$ such that $\Gamma(\mathcal{N})$ extends a copy of $\omega^2 \cdot 2$.  
\end{proposition}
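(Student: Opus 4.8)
The plan is to manufacture $\mathcal{N}$ by interleaving $\mathcal{L}$ and $\mathcal{M}$ so that it has order type $\omega\cdot 3$ and yet still contains $\mathcal{L}$, $\mathcal{M}$, and one further copy of $\omega\cdot 2$ as substructures. Write $\mathcal{L}=\A_1+\A_2$ and $\mathcal{M}=\B_1+\B_2$, where the $\A_i,\B_i$ are copies of $\omega$ (by the conventions of Remark~\ref{remark:omega-fixed} we may take their domains to be pairwise disjoint coinfinite subsets of $D$). Let $\mathcal{N}$ be the copy of $\omega\cdot 3$ whose first block is $\A_1$, whose third block is $\B_2$, and whose middle block is any copy of $\omega$ on the set $\dom(\A_2)\cup\dom(\B_1)$ in which $\A_2$ and $\B_1$ both embed as suborderings (for instance, interleave them element by element). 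Reading off substructures of $\mathcal{N}$, one checks that $\A_1+\A_2=\mathcal{L}$, that $\B_1+\B_2=\mathcal{M}$, and that $\A_2+\B_2$ (a copy of $\omega\cdot 2$) are all substructures of $\mathcal{N}$; hence by monotonicity $\Gamma(\mathcal{L})$, $\Gamma(\mathcal{M})$, and $\Gamma(\A_2+\B_2)$ all sit inside $\Gamma(\mathcal{N})$, with the order inherited.

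Next I would bring in the column structure. By Remark~\ref{remark:omega-fixed}, $\Gamma(\A_2)\cong\omega\cdot n\cong\Gamma(\B_2)$; let $\overline{q}=(q_i)_{i}$ and $\overline{r}=(r_i)_{i}$ be the top $\omega$-columns of $\Gamma(\A_2)$ and $\Gamma(\B_2)$. The first claim is that $\overline{q}$ is cofinal in $\Gamma(\mathcal{L})\cong\omega^2$ (and symmetrically $\overline{r}$ is cofinal in $\Gamma(\mathcal{M})$). Indeed, if some $z\in\Gamma(\mathcal{L})$ were above every element of $\overline{q}$, hence above all of $\Gamma(\A_2)$, then choosing by compactness finite $P\subseteq\A_1$ and $Q\subseteq\A_2$ with $z\in\Gamma(P+Q)$, the structure $\mathcal{C}=P+\A_2$ would be a copy of $\omega$ with $\Gamma(\mathcal{C})\supseteq\Gamma(\A_2)\cup\{z\}$, so $\Gamma(\mathcal{C})$ would extend a copy of $\omega\cdot n+1$, contradicting Remark~\ref{remark:omega-fixed}. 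The second claim is that $\overline{q}<^\star\overline{r}$ holds inside $\Gamma(\mathcal{N})$: since $\A_2+\B_2$ is a substructure of $\mathcal{N}$, Proposition~\ref{prop:last-chains-order} applied to $\A_2$ and $\B_2$ in this order gives $\Gamma(\A_2+\B_2)\models\overline{q}<^\star\overline{r}$, and monotonicity lifts this to $\Gamma(\mathcal{N})$.

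Finally I would assemble the copy of $\omega^2\cdot 2$. Fix $M$ witnessing $\overline{q}<^\star\overline{r}$ in $\Gamma(\mathcal{N})$, let $S_1$ be the set of elements of $\Gamma(\mathcal{L})$ that are $\geq q_{M+1}$, and let $S_2$ be the set of elements of $\Gamma(\mathcal{M})$ that are $\geq r_{M+1}$. Since $\Gamma(\mathcal{L})\cong\Gamma(\mathcal{M})\cong\omega^2$ and every nonempty final segment of $\omega^2$ has order type $\omega^2$, each of $S_1,S_2$ has order type $\omega^2$. For any $x\in S_1$, cofinality of $\overline{q}$ in $\Gamma(\mathcal{L})$ gives an $s>M$ with $x<q_s$ in $\Gamma(\mathcal{L})$; then for every $y\in S_2$ we get $x<q_s<r_{M+1}\leq y$ in $\Gamma(\mathcal{N})$ (using $s>M$, $M+1>M$, and $y\geq r_{M+1}$). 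So every element of $S_1$ precedes every element of $S_2$ in $\Gamma(\mathcal{N})$, and therefore $S_1\cup S_2$, with the order of $\Gamma(\mathcal{N})$, is a suborder of type $\omega^2+\omega^2=\omega^2\cdot 2$, which is what we want.

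The step I expect to be the main obstacle is getting the two halves lined up: one must choose the interleaving so that $\mathcal{L}$, $\mathcal{M}$, \emph{and} $\A_2+\B_2$ are simultaneously substructures of $\mathcal{N}$, and one must apply Proposition~\ref{prop:last-chains-order} to the pair $(\A_2,\B_2)$ in the order that forces the $\mathcal{L}$-part to land below the $\mathcal{M}$-part — the opposite inequality would be useless here. The short cofinality claim for the top columns is also essential, and is exactly the place where the normalization of Remark~\ref{remark:omega-fixed} is used.
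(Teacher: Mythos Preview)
Your argument is correct and follows essentially the same route as the paper's proof: the structure $\mathcal{N}=\A_1+\text{interleave}(\A_2,\B_1)+\B_2$ is exactly the paper's construction (the paper writes it as $\A+\sum_i(\beta_i+\gamma_i)+\D$), and both proofs pivot on applying Proposition~\ref{prop:last-chains-order} to the two second blocks to get $\overline{q}<^\star\overline{r}$ in $\Gamma(\mathcal{N})$. The only difference is in how the two $\omega^2$-blocks are exhibited: the paper invokes Proposition~\ref{prop:no-merge-chains} to pass to subsequences with $b_{n,i}<^\infty b_{n,i+1}$ (and similarly for $d$), whereas you argue directly that the top column of $\Gamma(\A_2)$ is cofinal in $\Gamma(\mathcal{L})$ and then take final segments $S_1,S_2$ of $\Gamma(\mathcal{L}),\Gamma(\mathcal{M})$; these are interchangeable ways of packaging the same finiteness observation coming from Remark~\ref{remark:omega-fixed}.
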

\begin{proof}
  Let $\mathcal{L} = \A + \B$ and $\mathcal{M} = \C + \D$, where $\A$, $\B$, $\C$ and $\D$ are copies of $\omega$.
  Let us fix the $\omega$-chains $\overline{b}_i = {(b_{i,j})}^\infty_{j=0}$ and $\overline{d}_i = {(d_{i,j})}^\infty_{j=0}$
  where $i = 1,\dots,n$ such that
  % $\Gamma(\B) \models \overline{b}_1 < \overline{b}_2 < \cdots < \overline{b}_n$ and
  % $\Gamma(\D) \models \overline{d}_1 < \overline{d}_2 < \cdots < \overline{d}_n$.
  \begin{align*}
    & \Gamma(\B) \models \overline{b}_1 < \overline{b}_2 < \cdots < \overline{b}_n\\
    & \Gamma(\D) \models \overline{d}_1 < \overline{d}_2 < \cdots < \overline{d}_n.
  \end{align*}
  By Proposition~\ref{prop:no-merge-chains}, we can suppose that the $\omega$-chains $\overline{b}_n$ and $\overline{d}_n$ are such that
  \begin{align}
    & \Gamma(\A+\B) \models \bigwedge_{i\in\omega} b_{n,i} <^\infty b_{n,i+1} \label{eq:b}\\
    & \Gamma(\C+\D) \models \bigwedge_{i\in\omega} d_{n,i} <^\infty d_{n,i+1}, \label{eq:d}.
  \end{align}
  Now by Proposition~\ref{prop:last-chains-order} we have that 
  \begin{equation}
    \label{eq:5}
    \Gamma(\B + \D) \models \overline{b}_n <^\star \overline{d}_n.
  \end{equation}
  For an arbitrary partition of $\B$ and $\C$ into finite parts such that $\B = \sum_{i\in\omega}\beta_i$ and $\C =
  \sum_{i\in\omega} \gamma_i$, let us consider the copy $\mathcal{N}$ of $\omega \cdot 3$, where
  \[\mathcal{N} = \A + \sum_{i\in\omega}(\beta_i + \gamma_i)+ \D.\]
  By monotonicity, (\ref{eq:5}) implies that $\Gamma(\mathcal{N}) \models \overline{b}_n <^\star \overline{d}_n$.
  Now, again by monotonicity, (\ref{eq:b}) and (\ref{eq:d}) imply that $\Gamma(\mathcal{N})$ extends a copy of $\omega^2~\cdot~2$.
  
\end{proof}

Now we are ready to finish the proof.
Consider two disjoint copies $\mathcal{L}$ and $\mathcal{M}$ of $\omega\cdot 3$ such that $\Gamma(\mathcal{L}) \cong \omega^2$
and $\Gamma(\mathcal{M}) \cong \omega^2$.
By Proposition~\ref{prop:omega-3-omega-2}, we obtain two disjoint copies $\mathcal{L}_1$ and $\mathcal{M}_1$ of $\omega \cdot 2$
such that $\Gamma(\mathcal{L}_1) \cong \omega^2$ and $\Gamma(\mathcal{M}_1) \cong \omega^2$.
Then by Proposition~\ref{prop:strict-growth}, from $\mathcal{L}_1$ and $\mathcal{M}_1$ we can construct a copy $\mathcal{N}$ of $\omega
\cdot 3$ such that $\Gamma(\mathcal{N}) \not\cong \omega^2$.
Thus, we have proven the following theorem.
\begin{theorem}\label{th:omega-3-not-to-squared}
  $\{\omega \cdot 3, \omega^\star \cdot 3\} \not\leq_c \{\omega^2, {(\omega^2)}^\star\}$.
\end{theorem}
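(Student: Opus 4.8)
The plan is to derive a contradiction from the assumption $\Gamma\colon\{\omega\cdot 3,\omega^\star\cdot 3\}\leq_c\{\omega^2,(\omega^2)^\star\}$ by constructing a single copy of $\omega\cdot 3$ whose $\Gamma$-image is strictly larger than $\omega^2$, while monotonicity forces every such image to be a suborder of $\omega^2$. First I would handle a trivial reduction: since $\omega\cdot 3$ has a least but no greatest element, its image is a well-order, hence of the form $\omega^2$ (if infinitely many "levels" appear) or else an ordinal $<\omega^2$; the dual case for $\omega^\star\cdot 3$ is symmetric, so it suffices to treat the well-ordered side. The work splits according to whether copies of $\omega$ sitting inside our input can already produce $\Gamma$-image $\cong\omega^2$.

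Next I would dispose of the "easy" case using a pigeonhole/amalgamation idea: if two copies $\A,\B$ of $\omega$ on disjoint domains both satisfy $\Gamma(\A)\cong\omega^2$ and $\Gamma(\B)\cong\omega^2$, then (invoking Proposition~\ref{prop:no-chains} on the cofinal $\omega$-chains witnessing the limit structure of each image) $\Gamma(\A+\B)$ contains an interleaving of the two, so it extends a copy of $\omega^2\cdot 2$; but $\A+\B$ embeds into a copy of $\omega\cdot 3$, contradicting monotonicity. This is Proposition~\ref{prop:omega-not-2}. So there can be at most one such "bad" domain, and after removing it we are in the situation where every copy of $\omega$ on a suitable coinfinite subdomain gives image of type $\omega\cdot n+\ell$. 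A further amalgamation (diagonally interleaving finite pieces of copies whose images grow without bound) shows via Proposition~\ref{prop:upper-bound} that we can fix one infinite domain $D$ and a uniform bound $n$, so that every copy of $\omega$ inside $D$ has image exactly $\omega\cdot n$ — this normalization (Remark~\ref{remark:omega-fixed}) is what makes the rest tractable.

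The heart of the argument is then purely combinatorial about how the $n$ final $\omega$-chains of images behave under concatenation. I would prove three lemmas in sequence: (i) concatenating two normalized copies of $\omega$ forces the top chains to compare in the "expected" direction, $\overline a_n<^\star\overline b_n$ (Proposition~\ref{prop:last-chains-order}), since the other order would let a finite prefix plus the second copy produce $\omega\cdot n+1$; (ii) when we prepend two further copies, the top chain of the last copy must have infinitely many pairs with infinitely much in between, i.e. it "opens up" to an $\omega^2$-like column (Proposition~\ref{prop:no-merge-chains}), again because otherwise a finite truncation yields $\omega\cdot n+1$; (iii) combining these, given a copy $\mathcal L$ of $\omega\cdot 3$ we can re-order its middle two blocks' finite pieces to collapse $\mathcal L$ into a copy $\mathcal M$ of $\omega\cdot 2$ whose image is $\cong\omega^2$ (Proposition~\ref{prop:omega-3-omega-2}). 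Finally, taking two disjoint copies $\mathcal L_1,\mathcal M_1$ of $\omega\cdot 2$ with $\Gamma$-image $\omega^2$ each, I interleave their middle blocks to build a copy $\mathcal N$ of $\omega\cdot 3$ whose image extends $\omega^2\cdot 2$ (Proposition~\ref{prop:strict-growth}), and the two $\omega\cdot 3$ inputs $\mathcal L,\mathcal M$ promised at the start (whose images are $\omega^2$, via Proposition~\ref{prop:omega-not-2} applied contrapositively — at least one copy of $\omega\cdot 3$ must have image $\omega^2$, and in fact by the normalization we may arrange two) feed this machine. That gives $\Gamma(\mathcal N)\supseteq\omega^2\cdot 2$, contradicting monotonicity since $\mathcal N$ embeds in $\omega\cdot 3$ and $\Gamma(\omega\cdot 3)$ must be a well-order of type $\leq\omega^2$.

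The main obstacle I anticipate is step (iii)/Proposition~\ref{prop:omega-3-omega-2}: turning the "no merging" fact into an explicit partition of the first two blocks into finite parts $\alpha_i,\beta_i$ so that, after re-concatenation, all the elements they contribute land strictly between consecutive points $c_i<c_{i+1}$ of the opened-up top chain of the third block, thereby filling each gap to an $\omega$-block and producing exactly $\omega^2$. This requires carefully controlling, via compactness of enumeration operators, where finite approximations place elements, and the bookkeeping is delicate because one must simultaneously respect monotonicity and the fixed bound $n$; everything else is a relatively mechanical chain of "a finite truncation would overshoot the bound $\omega\cdot n$" arguments.
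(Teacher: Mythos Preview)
Your outline is essentially the paper's proof: the same chain of Propositions~\ref{prop:omega-not-2}--\ref{prop:strict-growth}, assembled in the same order, with the same normalization via Remark~\ref{remark:omega-fixed}, and you correctly flag Proposition~\ref{prop:omega-3-omega-2} as the delicate step. The only wobble is your appeal to ``Proposition~\ref{prop:omega-not-2} contrapositively'' to obtain two disjoint copies of $\omega\cdot 3$ with image $\omega^2$: in fact \emph{every} copy of $\omega\cdot 3$ has image $\cong\omega^2$ simply because $\Gamma$ is assumed to be an embedding into $\{\omega^2,(\omega^2)^\star\}$ and the image contains an infinite ascending chain, so no contrapositive is needed.
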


\begin{corollary} \label{corol:omega-times-3}
  For any non-zero natural number $n$,
  \[n \geq 2 \iff \{\omega \cdot 3, \omega^\star \cdot 3\} \leq_c \{\omega^2 \cdot n, {(\omega^2)}^\star \cdot n\}.\]
\end{corollary}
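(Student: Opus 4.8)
The plan is to split the biconditional and settle each direction from material already established. The direction ``$\{\omega\cdot 3,\omega^\star\cdot 3\}\leq_c\{\omega^2\cdot n,(\omega^2)^\star\cdot n\}\ \Rightarrow\ n\geq 2$'' is free: since $n$ is a non-zero natural number, $n<2$ means $n=1$, in which case the right-hand class is literally $\{\omega^2,(\omega^2)^\star\}$ and Theorem~\ref{th:omega-3-not-to-squared} forbids a computable embedding.

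For the converse, fix $n\geq 2$. I would first extract two operators built in Section~\ref{sect:omega-times-2}. Corollary~\ref{cr:omega-n-omega-squared} at parameter $2$ gives an enumeration operator $\Gamma_2$ witnessing $\{\omega\cdot 3,\omega^\star\cdot 3\}\leq_c\{\omega^2\cdot 2,(\omega^2)^\star\cdot 2\}$ and sending a copy of $\omega\cdot 3$ to a copy of $\omega^2\cdot 2$, a copy of $\omega^\star\cdot 3$ to a copy of $(\omega^2)^\star\cdot 2$; Theorem~\ref{th:powers-simple} at parameter $3$ gives $\Gamma_3$ witnessing $\{\omega\cdot 3,\omega^\star\cdot 3\}\leq_c\{\omega^2\cdot 3,(\omega^2)^\star\cdot 3\}$ with $\omega\cdot 3\mapsto\omega^2\cdot 3$ and $\omega^\star\cdot 3\mapsto(\omega^2)^\star\cdot 3$. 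The only outside input is the elementary fact that every integer $n\geq 2$ is of the form $2a+3b$ with $a,b\in\omega$ (the Frobenius number of $\{2,3\}$ is $1$); fix such $a,b$.

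Next I would define $\Gamma$ to be, on input a linear order $\mathcal{L}$, the ordered concatenation of $a$ renamed pairwise-disjoint copies of $\Gamma_2(\mathcal{L})$ followed by $b$ renamed pairwise-disjoint copies of $\Gamma_3(\mathcal{L})$, with every element of an earlier copy declared below every element of a later copy. That $\Gamma$ is again an enumeration operator with disjoint-domain output is the same routine relabeling used in the proof of Corollary~\ref{corol:omega-times-2}. On a copy of $\omega\cdot 3$ its output has order type
\[
\underbrace{\omega^2\cdot 2+\cdots+\omega^2\cdot 2}_{a}+\underbrace{\omega^2\cdot 3+\cdots+\omega^2\cdot 3}_{b}=\omega^2\cdot(2a)+\omega^2\cdot(3b)=\omega^2\cdot n,
\]
and on a copy of $\omega^\star\cdot 3$ it has order type $(\omega^2)^\star\cdot(2a)+(\omega^2)^\star\cdot(3b)=(\omega^2)^\star\cdot n$. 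So $\Gamma(\A)$ is always the atomic diagram of a structure in $\{\omega^2\cdot n,(\omega^2)^\star\cdot n\}$, which is clause~(1); and since $\omega^2\cdot n\not\cong(\omega^2)^\star\cdot n$, $\Gamma$ sends the two non-isomorphic elements of $\{\omega\cdot 3,\omega^\star\cdot 3\}$ to non-isomorphic structures, which for a two-isomorphism-type class is exactly clause~(2). Hence $\Gamma\colon\{\omega\cdot 3,\omega^\star\cdot 3\}\leq_c\{\omega^2\cdot n,(\omega^2)^\star\cdot n\}$, and the corollary follows.

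I do not anticipate a real obstacle: the whole content is repackaging of Theorem~\ref{th:powers-simple}, Corollary~\ref{cr:omega-n-omega-squared}, and Theorem~\ref{th:omega-3-not-to-squared}. The only point worth a sentence is why one genuinely needs both $\Gamma_2$ and $\Gamma_3$: appending copies of a single operator would only reach widths $n$ divisible by $2$ or by $3$, so the role of the decomposition $n=2a+3b$ is exactly to splice one $\Gamma_2$-block with one $\Gamma_3$-block and thereby hit every width $n\geq 2$. One could also phrase the odd and even cases of $n$ directly, without the Frobenius wording, but the splicing idea is the same.
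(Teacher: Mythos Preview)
Your proposal is correct and follows essentially the same approach as the paper: both directions rest on Theorem~\ref{th:omega-3-not-to-squared}, Corollary~\ref{cr:omega-n-omega-squared}, and Theorem~\ref{th:powers-simple}, and the forward direction is obtained by concatenating copies of $\Gamma_2(\mathcal{L})$ and $\Gamma_3(\mathcal{L})$. The only cosmetic difference is that the paper writes the decomposition as an explicit even/odd case split ($n=2k$ versus $n=2k+3$), which is exactly the special choice $b\in\{0,1\}$ in your $n=2a+3b$; you even anticipate this in your final sentence.
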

\begin{proof}
  First consider the direction $(\Rightarrow)$. For each $n \geq 2$ we will show how to build an enumeration operator $\Gamma_n$.
  Notice that by Corollary~\ref{cr:omega-n-omega-squared} we have an enumeration operator
  \[\Gamma_2: \{\omega\cdot 3, \omega^\star \cdot 3\} \leq_c \{\omega^2 \cdot 2, {(\omega^\star)}^2 \cdot 2\}.\]
  Moreover, by Theorem~\ref{th:powers-simple}, we have an enumeration operator  
  \[\Gamma_3 : \{\omega\cdot 3, \omega^\star \cdot 3\} \leq_c \{\omega^2 \cdot 3, {(\omega^\star)}^2 \cdot 3\}.\]
  
  Let $n = 2k$ for some $k \geq 1$.
  Then $\Gamma_{n}$ works so that, for any input $\A$, it outputs $k$ disjoint copies of $\Gamma_2(\A)$.

  Let $n = 2k + 3$ for some $k \geq 0$.
  Then $\Gamma_n$ works so that, for any input $\A$, it outputs $k$ disjoint copies of $\Gamma_2(\A)$
  together with a copy of $\Gamma_3(\A)$.
  The direction $(\Leftarrow)$ is exactly Theorem~\ref{th:omega-3-not-to-squared}.
  
\end{proof}

\section{The general case} \label{sect:omega-times-k}

Here, using the same techniques as in Section~\ref{sect:case-3-2}, we will obtain the following theorem.
\begin{theorem}\label{theo:gen-case}
  For any $k \geq 3$, $\{\omega \cdot k, \omega^\star \cdot k\} \not\leq_c \{\omega^2, {(\omega^2)}^\star\}$.
\end{theorem}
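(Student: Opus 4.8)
The plan is to run the entire argument of Section~\ref{sect:case-3-2} essentially word for word, with the sole structural change that the role played there by the first two $\omega$-blocks of a copy of $\omega\cdot 3$ is now taken over by the first $k-1$ blocks of a copy of $\omega\cdot k$. Suppose toward a contradiction that $\Gamma\colon\{\omega\cdot k,\omega^\star\cdot k\}\leq_c\{\omega^2,(\omega^2)^\star\}$. Exactly as at the start of Section~\ref{sect:case-3-2}, $\Gamma$ sends every copy of $\omega$ and of $\omega^\star$ to a linear order, and since such a copy extends to a copy of $\omega\cdot k$ (resp.\ $\omega^\star\cdot k$), monotonicity forces $\Gamma(\A)$ to embed into $\omega^2$ for every $\A\cong\omega$; after possibly composing $\Gamma$ with order-reversal on the output (which is again an enumeration operator) we may assume $\Gamma(\omega\cdot k)\cong\omega^2$, and then only copies of $\omega$ need to be considered, the case of $\omega^\star\cdot k$ being symmetric. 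One first checks that the conclusion of Proposition~\ref{prop:no-chains} is available here: for disjoint copies $\A,\B$ of $\omega$ with ascending chains $\overline{x}\subseteq\Gamma(\A)$ and $\overline{y}\subseteq\Gamma(\B)$, the order $\A+\B$ lies inside a copy of $\omega\cdot k$, so $\Gamma(\A+\B)$ has no infinite descending chain, and the proof of Proposition~\ref{prop:no-chains} then yields $\Gamma(\A+\B)\models\overline{x}<^\star\overline{y}\ \lor\ \overline{y}<^\star\overline{x}$.

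Next I would reproduce Propositions~\ref{prop:omega-not-2} and~\ref{prop:upper-bound} and Remark~\ref{remark:omega-fixed} with only the cosmetic substitution of $\omega\cdot k$ for $\omega\cdot 3$ in each ``extend to a copy of the positive structure'' step: if two disjoint copies of $\omega$ both mapped onto $\omega^2$, then $\Gamma$ of a copy of $\omega\cdot k$ extending their sum would extend $\omega^2\cdot 2$, impossible; and the diagonal amalgamation of Proposition~\ref{prop:upper-bound} again produces an infinite set $D$ and a number $n$ such that $\Gamma(\A)\cong\omega\cdot n$ for every copy $\A$ of $\omega$ with coinfinite domain inside $D$, and all further copies of $\omega$ are taken this way. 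Propositions~\ref{prop:last-chains-order} and~\ref{prop:no-merge-chains} then carry over verbatim, since their proofs use only two or three copies of $\omega$, monotonicity, and the normalization $\Gamma(\A)\cong\omega\cdot n$, never the value of $k$.

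The value of $k$ enters only in the analogues of Propositions~\ref{prop:omega-3-omega-2} and~\ref{prop:strict-growth}. For the first, given a copy $\mathcal{L}=\A_1+\dots+\A_k$ of $\omega\cdot k$ whose blocks have coinfinite domains in $D$, I would apply Proposition~\ref{prop:no-merge-chains} to the last chain $\overline{c}=(c_i)_i$ of $\Gamma(\A_k)$ so that $\Gamma(\mathcal{L})\models\bigwedge_i c_i<^\infty c_{i+1}$, observe that any finite piece $\alpha_1+\dots+\alpha_{k-1}+\A_k$ is once more a copy of $\omega$ with $\Gamma$-image $\cong\omega\cdot n$ and hence contributes only finitely many points to each interval $(c_i,c_{i+1})$, and then chop $\A_1,\dots,\A_{k-1}$ into finite blocks $\alpha_{j,i}$ that thread through these intervals as ascending chains. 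The reordering $\mathcal{M}=\sum_i(\alpha_{1,i}+\dots+\alpha_{k-1,i})+\A_k$ is a copy of $\omega\cdot 2$ on $\dom(\mathcal{L})$; by construction $\Gamma(\mathcal{M})$ contains a copy of $\omega^2$, and since $\mathcal{M}$ itself sits inside a copy of $\omega\cdot k$ we get $\Gamma(\mathcal{M})\cong\omega^2$. For the analogue of Proposition~\ref{prop:strict-growth}, two disjoint copies $\mathcal{L}_1,\mathcal{M}_1$ of $\omega\cdot 2$ with $\Gamma(\mathcal{L}_1)\cong\Gamma(\mathcal{M}_1)\cong\omega^2$ are merged exactly as there into a copy $\mathcal{N}$ of $\omega\cdot 3$ whose $\Gamma$-image extends $\omega^2\cdot 2$; appending to $\mathcal{N}$ a fresh copy of $\omega\cdot(k-3)$ (possible precisely because $k\geq 3$) gives a copy of $\omega\cdot k$ whose image still extends $\omega^2\cdot 2$ by monotonicity. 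This is impossible, since $\Gamma(\omega\cdot k)\in\{\omega^2,(\omega^2)^\star\}$ and neither of these contains a copy of $\omega^2\cdot 2$. Putting it together: start from two disjoint copies of $\omega\cdot k$ mapping to $\omega^2$ (with all $2k$ blocks having coinfinite domains in $D$), descend each to a copy of $\omega\cdot 2$ mapping to $\omega^2$, then apply the growth step.

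I expect the only real work to be bookkeeping: verifying that when $k-1$ blocks are simultaneously sliced into finite pieces and threaded through the intervals of $\overline{c}$, the pieces exhaust each block, stay correctly ordered inside $\mathcal{M}$, and can be chosen by a single compactness argument forcing all the required relations $c_j<u_{j,\cdot}<c_{j+1}$. Conceptually nothing new is needed over Section~\ref{sect:case-3-2}; the no-interleaving fact, the amalgamation bound, and the normalization $\Gamma(\A)\cong\omega\cdot n$ are all insensitive to $k$, and the growth step only ever builds a copy of $\omega\cdot 3$, which for $k\geq 3$ can simply be padded up to $\omega\cdot k$.
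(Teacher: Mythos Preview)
Your proposal is correct and follows essentially the same route as the paper's own proof in Section~\ref{sect:omega-times-k}: generalize Proposition~\ref{prop:no-merge-chains} and Proposition~\ref{prop:omega-3-omega-2} to $k$ blocks, reuse Proposition~\ref{prop:strict-growth} unchanged, and pad the resulting copy of $\omega\cdot 3$ up to $\omega\cdot k$ by monotonicity. The only imprecision is that you say Proposition~\ref{prop:no-merge-chains} ``carries over verbatim'' and then apply it to $k$ copies $\A_1+\dots+\A_k$; the paper restates it for $k$ copies as a separate proposition, but as you note the proof is identical.
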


Again towards a contradiction, assume that we have fixed a number $k \geq 3$ and
an enumeration operator $\Gamma:\{\omega \cdot k, \omega^\star \cdot k\} \leq_c \{\omega^2, {(\omega^2)}^\star\}$.
Since Proposition~\ref{prop:omega-not-2} and Proposition~\ref{prop:upper-bound} still apply in this more general case,
we can use Remark~\ref{remark:omega-fixed} and suppose we have fixed a number $n$
such that we always work with copies $\A$ of $\omega$ such that $\Gamma(\A) \cong \omega \cdot n$.
By essentially repeating the proof of Proposition~\ref{prop:no-merge-chains},
we obtain the following proposition.
\begin{proposition}\label{prop:no-merge-chains}
  Let $\A_1,\A_2,\dots,\A_k$ be copies of $\omega$.
  Suppose that 
  \[\Gamma(\A_k) \models \overline{c}_1 < \overline{c}_2 < \cdots < \overline{c}_n,\]
  where $\overline{c}_i = {(c_{i,j})}^\infty_{j=0}$ are $\omega$-chains.
  Then there exists an infinite subsequence ${(i_s)}^\infty_{s=0}$  such that 
  \[\Gamma(\sum^k_{j=1} \A_j) \models \bigwedge_{s \in \omega} c_{n,i_s} <^\infty c_{n,i_{s+1}}.\]
\end{proposition}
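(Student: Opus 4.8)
The plan is to mimic exactly the argument given in the $k=3$ case (the old Proposition~\ref{prop:no-merge-chains} in Section~\ref{sect:case-3-2}), replacing the two-summand prefix $\A+\B$ by the $(k-1)$-summand prefix $\A_1+\A_2+\dots+\A_{k-1}$. First I would suppose, toward a contradiction, that no such infinite subsequence exists. Concretely, since $\Gamma(\sum_{j=1}^k \A_j)$ is a copy of $\omega^2$ (or a suborder thereof) extending the chains $\overline{c}_1<\dots<\overline{c}_n$ coming from $\Gamma(\A_k)$, failure of the conclusion means that the final chain $\overline{c}_n$ does \emph{not} contain an infinite subchain with infinitely many output elements between consecutive terms; equivalently, some tail of $\overline{c}_n$ has only finitely many elements of $\Gamma(\sum_{j=1}^k\A_j)$ between successive terms. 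Then $\Gamma(\sum_{j=1}^k \A_j)$ must look like $\overline{c}_1<\dots<\overline{c}_n+\mathcal{D}$ where $\mathcal{D}$ is an end segment that is genuinely ``new'' — it has order type $\omega^2$ and is not absorbed into the blocks cut out by $\overline{c}_n$.

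Next I would invoke compactness of the enumeration operator: pick any element $d\in\mathcal{D}$ lying strictly above every $c_{n,j}$. Since $d\in\Gamma(\sum_{j=1}^k\A_j)$, there are finite parts $\alpha_1\subseteq\A_1,\dots,\alpha_{k-1}\subseteq\A_{k-1}$ with $d\in\Gamma(\alpha_1+\dots+\alpha_{k-1}+\A_k)$. But $\alpha_1+\dots+\alpha_{k-1}+\A_k$ is itself a copy of $\omega$ (a finite prefix followed by one copy of $\omega$), so by Remark~\ref{remark:omega-fixed} we must have $\Gamma(\alpha_1+\dots+\alpha_{k-1}+\A_k)\cong\omega\cdot n$. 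By monotonicity this structure contains the chains $\overline{c}_1<\dots<\overline{c}_n$ together with $d$ sitting above all of them, so it extends a copy of $\omega\cdot n+1$ — contradicting $\Gamma(\alpha_1+\dots+\alpha_{k-1}+\A_k)\cong\omega\cdot n$. This contradiction establishes the proposition.

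The main obstacle — and it is a mild one — is justifying the dichotomy in the first paragraph, i.e.\ that failure of ``$\overline{c}_n$ has an infinite $<^\infty$-subchain in the big structure'' really forces an end segment $\mathcal{D}$ of type $\omega^2$ disjoint (as blocks) from what $\overline{c}_n$ cuts out, rather than some messier configuration. This is where one uses that $\Gamma(\sum_{j=1}^k\A_j)$ has order type exactly $\omega^2$ (or a suborder, since $\sum_{j=1}^k\A_j\cong\omega\cdot k$ embeds into $\omega\cdot k$ and hence $\Gamma$ of it embeds into $\Gamma(\omega\cdot k)$, which is $\omega^2$ by assumption): in $\omega^2$, any descending-chain-free linear order, if $\overline{c}_n$ enumerates a cofinal-in-some-initial-segment sequence whose consecutive gaps are eventually finite, then everything beyond that initial segment is an end segment which must still have type $\omega^2$ by a Cantor–Bendixson / block-counting argument, because removing a proper initial segment of $\omega^2$ whose complement is infinite leaves type $\omega^2$ again. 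All the remaining steps — compactness, the ``finite prefix plus one $\omega$ is still $\omega$'' observation, and monotonicity — are verbatim repeats of the $k=3$ proof and require no new ideas.
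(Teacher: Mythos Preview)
Your proposal is correct and follows essentially the same approach as the paper, which itself simply instructs the reader to repeat the $k=3$ argument verbatim with $\A+\B$ replaced by $\A_1+\dots+\A_{k-1}$. Your extra paragraph justifying the dichotomy (that failure of the conclusion forces a genuine $\omega^2$-type end segment $\mathcal{D}$ above $\overline{c}_n$) is more careful than the paper's own treatment, which simply asserts this step.
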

The next proposition is a generalization of Proposition~\ref{prop:omega-3-omega-2}.
\begin{proposition}\label{prop:omega-k-omega-2}
  For any linear ordering $\mathcal{L}$ of type $\omega\cdot k$,
  there is a linear ordering $\mathcal{M}$ of type $\omega \cdot 2$ with $\dom(\mathcal{L}) = \dom(\mathcal{M})$
  and $\Gamma(\mathcal{M}) \cong \omega^2$.
\end{proposition}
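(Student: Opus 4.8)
The plan is to mimic closely the proof of Proposition~\ref{prop:omega-3-omega-2}, absorbing the extra $\omega$-blocks ($k-1$ of them instead of just two) into a single copy of $\omega$ via a diagonal-type interleaving. Write $\mathcal{L} = \A_1 + \A_2 + \dots + \A_k$, where each $\A_j$ is a copy of $\omega$. The last block $\A_k$ plays the role that $\C$ played before: apply Proposition~\ref{prop:no-merge-chains} to obtain an $\omega$-chain $\overline{c} = (c_i)_{i\in\omega}$ inside $\Gamma(\A_k)$ with
\[\Gamma\Bigl(\sum^k_{j=1}\A_j\Bigr) \models \bigwedge_{i\in\omega} c_i <^\infty c_{i+1}.\]
As in the $k=3$ case, the point is that no finite approximation can already witness $c_i <^\infty c_{i+1}$: if finite parts $\alpha_1,\dots,\alpha_{k-1}$ of $\A_1,\dots,\A_{k-1}$ satisfied $\Gamma(\alpha_1 + \dots + \alpha_{k-1} + \A_k) \models c_i <^\infty c_{i+1}$, then since $\alpha_1 + \dots + \alpha_{k-1} + \A_k$ is a copy of $\omega$ and $\Gamma(\A_k) \cong \omega\cdot n$, the image would extend a copy of $\omega\cdot(n+1)$, contradicting Remark~\ref{remark:omega-fixed}. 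Hence every choice of finite parts $\alpha_1,\dots,\alpha_{k-1}$ contributes only finitely many elements to each interval $(c_i, c_{i+1})$.

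Next, following the pattern of Proposition~\ref{prop:omega-3-omega-2}, I would build finite decompositions $\A_j = \sum_{i\in\omega}\alpha_{j,i}$ for each $j = 1,\dots,k-1$, together with $\omega$-chains $\overline{u}^{(j)}_i = (u^{(j)}_{i,m})_{m\in\omega}$, chosen so that for every $i$,
\[\Gamma\Bigl(\alpha_{1,i} + \dots + \alpha_{k-1,i} + \A_k\Bigr) \models \bigwedge^i_{m=0}\bigwedge^{k-1}_{j=1} c_m < u^{(j)}_{m,i-m} < c_{m+1}.\]
This is possible precisely because the finitely many elements that the finite parts contribute to $(c_m, c_{m+1})$ can be enumerated and placed; one simply waits, for a given finite stage, until the required elements appear. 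By monotonicity of enumeration operators, taking the union over $i$ yields
\[\Gamma\Bigl(\sum_{i\in\omega}\bigl(\alpha_{1,i} + \dots + \alpha_{k-1,i}\bigr) + \A_k\Bigr) \models \bigwedge_{m\in\omega}\bigwedge_{\ell\in\omega}\bigwedge^{k-1}_{j=1} c_m < u^{(j)}_{m,\ell} < c_{m+1}.\]
Therefore $\mathcal{M} = \sum_{i\in\omega}(\alpha_{1,i} + \dots + \alpha_{k-1,i}) + \A_k$ is a copy of $\omega\cdot 2$ with $\dom(\mathcal{M}) = \dom(\mathcal{L})$: the first $\omega$ is the interleaved sum of the $k-1$ front blocks, and $\A_k$ supplies the second $\omega$. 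Each interval $(c_m, c_{m+1})$ is now infinite, so $\Gamma(\mathcal{M})$ has order type $\geq \omega^2$; combined with $\Gamma(\mathcal{M}) \cong \omega\cdot n' + \ell'$ for some $n', \ell'$ (the standing assumption on copies of $\omega$ in this section, inherited through Remark~\ref{remark:omega-fixed}), the only possibility consistent with the non-existence of images of type $\geq \omega^2$ for copies of $\omega$ — no, more carefully: $\mathcal{M}$ is a copy of $\omega\cdot 2$, not of $\omega$, so the relevant constraint is that $\Gamma(\mathcal{M})$ embeds into $\omega^2$, and an order with infinitely many points strictly between consecutive terms of an $\omega$-chain, that chain being cofinal, has type exactly $\omega^2$. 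So $\Gamma(\mathcal{M}) \cong \omega^2$.

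The main obstacle is the bookkeeping in the middle step: arranging the simultaneous placement of the $k-1$ families of chains $\overline{u}^{(j)}$ inside the intervals $(c_m, c_{m+1})$ while ensuring the front part really is a single copy of $\omega$ and not something longer. The subtlety is that a priori $\Gamma(\alpha_{1,i}+\dots+\alpha_{k-1,i}+\A_k)$ could put some elements of the $\alpha_{j,i}$'s \emph{outside} all the intervals $(c_m,c_{m+1})$, or beyond all the $c_m$; one must argue, exactly as in the $k=3$ case via Remark~\ref{remark:omega-fixed}, that such elements are finite in number at each stage and can be swept into later finite parts, so that in the limit every element of every $\A_j$ ($j < k$) lands in some bounded interval $(c_m, c_{m+1})$. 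Once this is set up the computation is a routine generalization of Proposition~\ref{prop:omega-3-omega-2}, and the remaining steps of the overall argument (the analogue of Proposition~\ref{prop:strict-growth}, producing from two copies $\mathcal{M}, \mathcal{M}'$ of $\omega\cdot 2$ with $\Gamma$-image $\omega^2$ a copy of $\omega\cdot k$ whose image extends $\omega^2\cdot 2$, hence contradicting that $\Gamma$ maps into $\{\omega^2, (\omega^2)^\star\}$) go through verbatim, using Proposition~\ref{prop:no-chains} and Proposition~\ref{prop:last-chains-order} as before. This completes the reduction of Theorem~\ref{theo:gen-case} to the same endgame as in Section~\ref{sect:case-3-2}.
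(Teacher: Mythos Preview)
Your proposal is correct and follows essentially the same route as the paper's proof: interleave the first $k-1$ blocks into a single copy of $\omega$, keep $\A_k$ as the second block, and use the cofinal $\omega$-chain $(c_i)$ from Proposition~\ref{prop:no-merge-chains} together with the ``finite parts contribute only finitely many points to each interval $(c_i,c_{i+1})$'' observation (via Remark~\ref{remark:omega-fixed}) to populate every interval with an $\omega$-chain. The only cosmetic difference is that you track $k-1$ separate witness chains $\overline{u}^{(j)}$ per interval where the paper uses a single chain $\overline{u}_\ell$; one chain already suffices to force each $(c_\ell,c_{\ell+1})$ infinite, so your extra bookkeeping is harmless but unnecessary.
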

\begin{proof}
  Let $\mathcal{L} = \sum^k_{i=1} \A_i$, where $\A_i$ are copies of $\omega$.
  By Proposition~\ref{prop:no-merge-chains}, consider the $\omega$-chain ${(c_{i})}^\infty_{i=0}$ in $\Gamma(\A_k)$ such that
  \[\Gamma(\sum^k_{j=1} \A_j) \models \bigwedge_{i\in\omega} c_i <^\infty c_{i+1}.\]

  As in the proof of Proposition~\ref{prop:omega-3-omega-2}, for any $\ell$,
  let $\overline{u}_\ell = {(u_{\ell,j})}^\infty_{j=0}$ be an $\omega$-chain such that
  we can partition $\A_i$ into finite parts with $\A_i = \sum_{j\in\omega} \alpha_{i,j} $, where $i = 1,2,\dots, k-1$, where
  for all $j$,
  \[\Gamma(\sum^{k-1}_{i=1}\alpha_{i,j} + \A_{k}) \models \bigwedge^j_{\ell=0} c_\ell < u_{\ell,j-\ell} < c_{\ell+1}.\]
  Then, by monotonicity, we obtain the following:
  \[\Gamma(\sum_{j\in\omega}\sum^{k-1}_{i=1}\alpha_{i,j} + \A_{k}) \models \bigwedge_{\ell\in\omega}\bigwedge_{j\in\omega}c_\ell < u_{\ell,j}< c_{\ell+1}.\]
  It follows that \[\mathcal{M} = \sum_{j\in\omega}\sum^{k-1}_{i=1}\alpha_{i,j} + \A_{k}\] is a copy of $\omega \cdot 2$ with $\dom(\mathcal{M}) =
  \dom(\mathcal{L})$ which produces a copy of $\omega^2$.
  
\end{proof}

Let us take two disjoint copies $\mathcal{L}$ and $\mathcal{M}$ of $\omega\cdot k$ such that $\Gamma(\mathcal{L}) \cong \omega^2$
and $\Gamma(\mathcal{M}) \cong \omega^2$.
By Proposition~\ref{prop:omega-k-omega-2}, we obtain two disjoint copies $\mathcal{L}_1$ and $\mathcal{M}_1$ of $\omega \cdot 2$
such that $\Gamma(\mathcal{L}_1) \cong \omega^2$ and $\Gamma(\mathcal{M}_1) \cong \omega^2$.
Then by Proposition~\ref{prop:strict-growth}, from $\mathcal{L}_1$ and $\mathcal{M}_1$ we can construct a copy $\mathcal{N}$ of $\omega
\cdot 3$ such that $\Gamma(\mathcal{N})$ extends a copy of $\omega^2 \cdot 2$.
By monotonicity, any copy $\mathcal{\hat{N}}$ of $\omega \cdot k$ extending $\mathcal{N}$ will be such that
$\Gamma(\mathcal{\hat{N}}) \not \cong \omega^2$.
We conclude that \[\{\omega \cdot k, \omega^\star \cdot k\} \not\leq_c \{\omega^2, {(\omega^2)}^\star\}.\]

\section{Positive Results for Powers of $\omega$} \label{sect:powers}

\begin{proposition}
  For any $n \geq 1$, $\{\omega^n,{(\omega^n)}^\star\} \leq_c \{\omega^{2n},{(\omega^{2n})}^{\star}\}$.
\end{proposition}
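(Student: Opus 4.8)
The plan is to generalize the positive construction of Theorem~\ref{th:powers-simple}: define the enumeration operator $\Gamma$ that, on input a linear ordering $\mathcal{L}$, outputs the generalized sum $\sum_{a\in\mathcal{L}} \mathcal{L}_a$, where $\mathcal{L}_a$ is a copy of $\mathcal{L}$ indexed by $a$ (the pairs $(a,b)$ with $b\in\mathcal{L}$, ordered lexicographically). This is visibly an enumeration operator: whenever a basic sentence $b <_{\mathcal{L}} b'$ (or $a <_{\mathcal{L}} a'$) enters the input diagram, the corresponding pairs $(a,b) < (a,b')$ (respectively all $(a,c) < (a',c')$) are enumerated into the output, using only that finite piece of the diagram. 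Monotonicity and the fact that $\Gamma(\mathcal{L})$ is always a linear ordering on copies of $\omega^n$ and $(\omega^n)^\star$ are then immediate.

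The core computation is ordinal arithmetic. If $\mathcal{L}\cong\omega^n$, then $\Gamma(\mathcal{L}) \cong \sum_{a\in\omega^n}\omega^n = \omega^n\cdot\omega^n = \omega^{2n}$, using the standard identity $\omega^n\cdot\omega^n=\omega^{n+n}$ and the fact that a generalized sum $\sum_{a\in\mathcal{A}}\mathcal{B}$ with all summands order-isomorphic to $\mathcal{B}$ has order type $\mathcal{B}\cdot\mathcal{A}$. If $\mathcal{L}\cong(\omega^n)^\star$, then $\Gamma(\mathcal{L})\cong\sum_{a\in(\omega^n)^\star}(\omega^n)^\star = (\omega^n)^\star\cdot(\omega^n)^\star$, and since $(K\cdot L)^\star \cong L^\star\cdot K^\star$ one gets $(\omega^n)^\star\cdot(\omega^n)^\star\cong(\omega^n\cdot\omega^n)^\star=(\omega^{2n})^\star$. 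Hence $\Gamma$ sends $\omega^n$ to $\omega^{2n}$ and $(\omega^n)^\star$ to $(\omega^{2n})^\star$, and since these two target orders are non-isomorphic (one has a least element, the other a greatest), condition~(2) of the definition of computable embedding holds.

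I would write this up in essentially the same format as the proof of Theorem~\ref{th:powers-simple}: describe $\Gamma$ informally as ``replace each point $a$ of the input by a copy $\mathcal{L}_a$ of the whole input, order lexicographically,'' then verify the two cases by the displayed ordinal identities. There is no real obstacle here — the only point requiring a line of justification is the arithmetic identity $\sum_{a\in\mathcal{A}}\mathcal{A}\cong\mathcal{A}\cdot\mathcal{A}$ together with $\omega^n\cdot\omega^n=\omega^{2n}$ (and its reverse), both of which are standard facts about ordinal and linear-order multiplication (see Rosenstein~\cite{Ros82}). One should also remark that the construction does not need to know $n$ in advance: the same operator $\Gamma$ works uniformly, exactly as in Theorem~\ref{th:powers-simple}, which is worth noting since it shows the whole family $\{\omega^n,(\omega^n)^\star\}$ collapses into the $\leq_c$-cone below $\{\omega^{2n},(\omega^{2n})^\star\}$ via a single uniform reduction.
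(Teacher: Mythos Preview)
Your proposal is correct and is exactly the Cartesian product (squaring) construction that the paper invokes in its one-line proof; you have simply spelled out the details that the paper leaves to the reference \cite[Definition~1.40]{Ros82}. One small slip: in Rosenstein's convention the general identity is $(K\cdot L)^\star \cong K^\star\cdot L^\star$, not $L^\star\cdot K^\star$, though this is immaterial here since $K=L=\omega^n$.
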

\begin{proof}
  Standard cartesian product construction as in \cite[Definition 1.40]{Ros82}.
\end{proof}

\begin{theorem}\label{th:power-2-to-3}
  $\{\omega^2, {(\omega^2)}^\star\} \leq_c \{\omega^3, {(\omega^3)}^\star\}$.
\end{theorem}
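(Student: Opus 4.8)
The plan is to build an enumeration operator $\Gamma$ that, working element-by-element on the input linear order $\mathcal{L}$, replaces each point $a \in \mathcal{L}$ by a fresh copy $\mathcal{L}_a$ of $\omega$, inserting these copies lexicographically, so that $\Gamma(\mathcal{L}) \cong \sum_{a \in \mathcal{L}} \omega = \mathcal{L} \cdot \omega$. Concretely, for each $a$ and each $b$ listed so far we emit a pair $(a,b)$, and we order pairs by: $(a,b) <_{\Gamma(\mathcal{L})} (a',b')$ iff $a <_{\mathcal{L}} a'$, or $a = a'$ and $b$ appeared before $b'$ in the enumeration of $\dom(\mathcal{L})$. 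This is exactly the generalized-sum / cartesian-product style construction already used in Theorem~\ref{th:powers-simple}; it is clearly given by an enumeration operator, and it is monotone. If $\mathcal{L} \cong \omega^2$, then $\Gamma(\mathcal{L}) \cong \omega^2 \cdot \omega = \omega^3$, and if $\mathcal{L} \cong (\omega^2)^\star$, then $\Gamma(\mathcal{L}) \cong (\omega^2)^\star \cdot \omega^\star \cong (\omega^3)^\star$ (using $(\omega^2)^\star \cdot \omega^\star = (\omega \cdot \omega^2)^\star = (\omega^3)^\star$). So condition~(1) of the definition of computable embedding holds.

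Next I would verify condition~(2), injectivity on isomorphism types. Since $\{\omega^2, (\omega^2)^\star\}$ contains exactly two structures up to isomorphism and they are not isomorphic, it suffices to observe that $\omega^3 \not\cong (\omega^3)^\star$ — e.g.\ $\omega^3$ has a least element and no greatest, while $(\omega^3)^\star$ has a greatest element and no least. Hence $\Gamma(\mathcal{A}) \cong \Gamma(\mathcal{B})$ iff $\mathcal{A} \cong \mathcal{B}$ for $\mathcal{A}, \mathcal{B}$ in the input class, and $\Gamma$ is a computable embedding.

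The one point that needs a little care — and is really the only obstacle — is the standard technicality already flagged in Section~\ref{sect:case-3-2}: for a \emph{proper} substructure $\mathcal{A} \subseteq \mathcal{L}$ the output $\Gamma(\mathcal{A})$ need not yet be a total linear order, since the relative position of two points $(a,b)$ and $(a',b')$ with $a \neq a'$ is only decided once enough of $\mathcal{L}$ has been enumerated to compare $a$ and $a'$. This is harmless: on the actual members of the input class the construction does produce a total order, which is all the definition requires, and one checks as in the cited remark that no conflicting basic sentences are ever enumerated along compatible extensions. I would also note explicitly that the same $\Gamma$ handles both $\omega^2$ and $(\omega^2)^\star$ simultaneously, so there is no need to diagonalize or branch on which input we are given. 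The whole argument is routine once the right replacement $a \mapsto (\text{fresh copy of }\omega)$ is written down; I expect no genuine difficulty beyond bookkeeping the lexicographic order correctly.
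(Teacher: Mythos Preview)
Your construction has a genuine error in the $(\omega^2)^\star$ case. You replace each point $a\in\mathcal{L}$ by a block ordered according to the \emph{enumeration} of $\dom(\mathcal{L})$, and that block therefore has order type $\omega$ no matter what $\mathcal{L}$ is. Hence $\Gamma(\mathcal{L})\cong \sum_{a\in\mathcal{L}}\omega = \omega\cdot\mathcal{L}$ (in the usual convention $\alpha\cdot\beta$ means $\beta$ copies of $\alpha$). For $\mathcal{L}\cong\omega^2$ this indeed gives $\omega\cdot\omega^2=\omega^3$, but for $\mathcal{L}\cong(\omega^2)^\star$ you obtain $\omega\cdot(\omega^2)^\star$, i.e.\ $(\omega^2)^\star$ many copies of $\omega$. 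This order has infinite ascending chains (inside each block) \emph{and} infinite descending chains (across blocks), so it is neither a well-order nor a reverse well-order; in particular it is not $(\omega^3)^\star$. Your line ``$\Gamma(\mathcal{L})\cong(\omega^2)^\star\cdot\omega^\star$'' silently swaps the inner $\omega$ for an $\omega^\star$, which your operator never produces. The obvious patch of ordering the second coordinate by $\leq_{\mathcal{L}}$ instead of enumeration order gives $\Gamma(\mathcal{L})\cong\mathcal{L}\cdot\mathcal{L}$, which lands in $\{\omega^4,(\omega^4)^\star\}$, not $\{\omega^3,(\omega^3)^\star\}$.

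The paper's operator avoids this by making the block attached to $a$ a \emph{sub-interval of the input} $\mathcal{L}$ itself: it outputs pairs $(a,d)$ with $b\leq_{\mathcal{L}} d\leq_{\mathcal{L}} c$, where $b$ (resp.\ $c$) is the $\leq_{\mathcal{L}}$-least (resp.\ greatest) element seen so far that is $\leq_{\mathbb{N}} a$. This way the block inherits the direction of $\mathcal{L}$ automatically, and the $\leq_{\mathbb{N}}$-bound keeps each block of type $\omega\cdot k+\ell$ (or its reverse) rather than a full copy of $\mathcal{L}$, so that the sum over $a\in\omega^2$ collapses to $\omega^3$ rather than $\omega^4$. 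That self-referential use of the input order is the idea your argument is missing.
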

\begin{proof}
  The idea here is to replace each point by an interval of the form $[a,b]$, which means
  that this interval will have type $\omega \cdot k + \ell$ in the first case and
  $\ell + \omega^\star \cdot k$ in the second case.
  
  We informally describe the work of the enumeration operator $\Gamma$.
  Let us consider some finite diagram $\delta(\overline{a})$ of the input structure $\A$.
  For each $a$ in $\delta(\overline{a})$, $\Gamma$ executes the following steps:
  Find elements $b$ and $c$ such that $b \leq_\A a \leq_\A c$, where $b,c \leq_{\mathbb{N}} a$,
  such that $b$ is the $\leq_{\A}$-least such element and $c$ is the $\leq_{\A}$-greatest such element
  in $\delta(\overline{a})$.
  For all elements $d$ in $\delta(\overline{a})$ such that $b \leq_\A d \leq_\A c$, $\Gamma$
  enumerates in the output structure the pair $(a,d)$.
  All pairs are ordered lexicographically.
  This concludes the description of $\Gamma$.
  Now we have two cases to consider.
  
  Suppose that $\A = \sum_{i\in\omega}\A_i$, where $\A_i$ are copies of $\omega$.
  It is easy to see that for each $i$, there are only finitely many elements in $\A_i$,
  which contribute finitely many pairs in $\Gamma(\A)$.
  For instance, let $a$ be the $<_{\mathbb{N}}$-least element in $\A \setminus \A_0$.
  It follows that in $\A_0$ only the elements which are $<_{\mathbb{N}}$-less than $a$
  contribute finitely many pairs in $\Gamma(\A)$. We have
  \begin{align*}
    \Gamma(\A) & =  \sum_{a\in\A_0} (\omega\cdot k_{a,0} + \ell_{a,0}) + \cdots + \sum_{a \in \A_i} (\omega \cdot k_{a,i} + \ell_{a,i}) + \cdots\\
               & =  \omega^2 + \cdots + \omega^2 + \cdots\\
               & = \omega^3.
  \end{align*}

  For the second case, suppose that $\A = \sum_{i\in\omega^\star}\A_i$, where $\A_i$ are copies of $\omega^\star$.
  Again, for each $i$, there are only finitely many elements in $\A_i$,
  which contribute finitely many pairs in $\Gamma(\A)$.
  It follows that
  \begin{align*}
    \Gamma(\A) & =  \cdots + \sum_{a\in\A_i} (\ell_{a,i} + \omega^\star \cdot k_{a,i} ) + \cdots + \sum_{a \in \A_0} ( \ell_{a,0} + \omega^\star \cdot k_{a,0}) \\
               & =  \cdots + {(\omega^2)}^\star + \cdots + {(\omega^2)}^\star \\
               & = {(\omega^3)}^\star.
  \end{align*}
  
\end{proof}

% Using the same enumeration operator $\Gamma$ as in Theorem~\ref{th:power-2-to-3}, we obtain the following corollary.
\begin{corollary}
  For any $n \geq 1$, $\{\omega^n, {(\omega^n)}^\star\} \leq_c \{\omega^{2n-1}, {(\omega^{2n-1})}^\star\}$.
\end{corollary}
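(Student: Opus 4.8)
The plan is to verify that the \emph{same} enumeration operator $\Gamma$ built in the proof of Theorem~\ref{th:power-2-to-3} already witnesses $\{\omega^n,(\omega^n)^\star\}\leq_c\{\omega^{2n-1},(\omega^{2n-1})^\star\}$ for every $n\geq 1$; only the computation of the output type has to be redone in the general setting. Recall that on an input ordering $\A$, $\Gamma$ replaces each point $a$ by the interval $[b_a,c_a]$ of $\A$, where $b_a$ is the $\leq_{\A}$-least element $x$ with $x\leq_{\A}a$ and $x\leq_{\mathbb{N}}a$, and $c_a$ is the $\leq_{\A}$-greatest element $x$ with $x\geq_{\A}a$ and $x\leq_{\mathbb{N}}a$, all output pairs being ordered lexicographically. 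As more of the input diagram is revealed, $b_a$ can only move $\leq_{\A}$-downward and $c_a$ only $\leq_{\A}$-upward, so $[b_a,c_a]$ only grows; hence $\Gamma$ is a genuine (monotone) enumeration operator. For $n=1$ the statement is trivial, since every $[b_a,c_a]$ is then finite and $\Gamma$ maps $\omega$ to $\omega$ and $\omega^\star$ to $\omega^\star$; so assume $n\geq 2$.

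I would first compute $\Gamma(\A)$ for $\A\cong\omega^n$. Write $\A=\sum_{i\in\omega}\A_i$ with each $\A_i\cong\omega^{n-1}$, so that $\Gamma(\A)=\sum_{i\in\omega}\big(\sum_{a\in\A_i}[b_a,c_a]\big)$. Fix a block $\A_i$. Since $\bigcup_{i'>i}\A_{i'}$ is infinite, it has a least $\mathbb{N}$-value $s_i$; hence, for each $a\in\A_i$ with $a\geq s_i$ --- that is, for all but at most $s_i$ many members of $\A_i$ --- there is an element $x$ with $x\leq_{\mathbb{N}}a$ and $x>_{\A}a$ lying in some block $\A_j$ with $j>i$, which forces $c_a$ into a block of index $>i$. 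For such an $a$ the interval $[b_a,c_a]$ is a final segment of the block containing $b_a$, followed by the finitely many full blocks strictly in between, followed by an initial segment of the block containing $c_a$; since $\omega^{n-1}$ is additively indecomposable, every final segment of $\omega^{n-1}$ again has type $\omega^{n-1}$, so $[b_a,c_a]$ has order type $\omega^{n-1}\cdot m_a+r_a$ for some finite $m_a\geq 1$ and some $r_a<\omega^{n-1}$. The finitely many remaining points of $\A_i$ contribute intervals of type $<\omega^{n-1}$.

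The heart of the argument is then a short ordinal computation. Splitting $\A_i$ at the $\leq_{\A}$-largest of its finitely many exceptional points, the part lying strictly above it is a final segment of $\A_i$, hence again $\cong\omega^{n-1}$, and all of its points are non-exceptional, so it contributes $\sum_{a\in\omega^{n-1}}(\omega^{n-1}\cdot m_a+r_a)$; because $\omega^{n-1}$ absorbs every smaller ordinal on the left, the remainders $r_a$ disappear and this collapses to $\omega^{n-1}\cdot\big(\sum_{a\in\omega^{n-1}}m_a\big)=\omega^{n-1}\cdot\omega^{n-1}=\omega^{2n-2}$, using that blowing up each point of $\omega^{n-1}$ into finitely many points leaves the type unchanged. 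The remaining initial part contributes some ordinal $<\omega^{2n-2}$, which is absorbed as well. Hence $\sum_{a\in\A_i}[b_a,c_a]\cong\omega^{2n-2}$, and therefore $\Gamma(\A)\cong\sum_{i\in\omega}\omega^{2n-2}=\omega^{2n-2}\cdot\omega=\omega^{2n-1}$. The case $\A\cong(\omega^n)^\star$ is the exact mirror image: after reversing all orders, final segments of $\omega^{n-1}$ turn into initial segments of $(\omega^{n-1})^\star$ of the same type, and one obtains $\Gamma(\A)\cong(\omega^{2n-1})^\star$. Since the two members of $\{\omega^n,(\omega^n)^\star\}$ are non-isomorphic and are sent to the two non-isomorphic members of $\{\omega^{2n-1},(\omega^{2n-1})^\star\}$, both requirements in the definition of a computable embedding hold, and $\Gamma$ is the desired reduction.

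The step I expect to be the main obstacle is the order-arithmetic bookkeeping in the two middle paragraphs: one has to check, uniformly over all presentations of $\omega^n$, that (i) in each block only finitely many points fail to produce an interval of type in $[\omega^{n-1},\omega^n)$, and (ii) these finitely many ``short'' intervals --- which may sit at arbitrary positions inside the block rather than at its front --- are genuinely absorbed and do not perturb the block's total type $\omega^{2n-2}$. Once this is settled, what remains is either the additive-indecomposability manipulations indicated above or an essentially verbatim repetition of the two cases in the proof of Theorem~\ref{th:power-2-to-3}.
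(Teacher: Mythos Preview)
Your proposal is correct and follows essentially the same approach as the paper's own proof: reuse the operator $\Gamma$ from Theorem~\ref{th:power-2-to-3}, decompose the input as $\sum_{i\in\omega}\A_i$ with $\A_i\cong\omega^{n-1}$, observe that all but finitely many points of each block contribute an interval of type $\omega^{n-1}\cdot k+\alpha$ with $k\geq 1$ and $\alpha<\omega^{n-1}$, and conclude that each block contributes $\omega^{2n-2}$. The paper simply asserts the absorption step (``it is easy to see\ldots''), whereas you spell out the additive-indecomposability bookkeeping more carefully; in particular your observation that the finitely many short intervals need not sit at the front of the block, and your verification that the initial segment up to the last exceptional point contributes an ordinal $<\omega^{2n-2}$, make explicit a point the paper leaves to the reader.
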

\begin{proof}
  We use the same enumeration operator $\Gamma$ as in Theorem~\ref{th:power-2-to-3}.
  Suppose $\A = \sum_{i\in\omega}\A_i$, where $\A_i$ are copies of $\omega^{n-1}$.
  As before, it is easy to see that in each $\A_i$, there are only finitely many
  elements whose contribution to $\Gamma(\A)$ form an ordinal of type $ < \omega^{n-1}$.
  All other infinitely many elements contribute to $\Gamma(\A)$ an ordinal of type $\omega^{n-1}\cdot k + \alpha$, for some $k < \omega$ and $\alpha < \omega^{n-1}$.
  Then
    
  \begin{align*}
    \Gamma(\A) & \cong \sum_{a\in\A_0} (\omega^{n-1} \cdot k_{a,0} + \alpha_{a,0}) + \cdots +\sum_{a\in\A_i}(\omega^{n-1} \cdot k_{a,i} + \alpha_{a,i}) + \cdots \\
               & = \omega^{n-1}\cdot \omega^{n-1} + \cdots + \omega^{n-1}\cdot\omega^{n-1} + \cdots\\
               & = \omega^{2n-2} \cdot \omega \\
               & = \omega^{2n-1}.
  \end{align*}
  The case when $\A \cong {(\omega^n)}^\star$ is similar.
\end{proof}

\begin{corollary}
  For any $n \geq 2$, $\{\omega^2,{(\omega^2)}^\star\} \leq_c \{\omega^n, {(\omega^n)}^\star\}$.
\end{corollary}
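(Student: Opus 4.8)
The plan is a short strong induction on $n$, fueled by two exponent-increasing embeddings already established in this section together with the fact that $\leq_c$ is a preorder. From the opening Proposition of this section we have, for every $k \geq 1$,
\[\{\omega^k,{(\omega^k)}^\star\} \leq_c \{\omega^{2k},{(\omega^{2k})}^\star\},\]
and from the previous Corollary,
\[\{\omega^k,{(\omega^k)}^\star\} \leq_c \{\omega^{2k-1},{(\omega^{2k-1})}^\star\}.\]
These are essentially the only exponent-increasing steps on offer: there is no direct embedding of $\{\omega^k,{(\omega^k)}^\star\}$ into $\{\omega^{k+1},{(\omega^{k+1})}^\star\}$ available to us — the interval construction of Theorem~\ref{th:power-2-to-3} in fact jumps from $\omega^k$ to $\omega^{2k-1}$, as the previous Corollary makes explicit — so a genuine induction is needed to realize all intermediate exponents.

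The plan is then to set
\[S \;=\; \{\, n \geq 2 \;:\; \{\omega^2,{(\omega^2)}^\star\} \leq_c \{\omega^n,{(\omega^n)}^\star\}\,\}\]
and to prove $S = \{\,n : n \geq 2\,\}$. First, $2 \in S$ via the identity enumeration operator. Second, the \emph{closure property}: if $k \in S$, then composing a computable embedding witnessing $k \in S$ with either of the two embeddings above, instantiated at $k$, gives — since $\leq_c$ is transitive — both $2k \in S$ and $2k-1 \in S$; here one should note that the composite enumeration operator still acts correctly on both $\omega^2$ and ${(\omega^2)}^\star$ and is injective on isomorphism types, but this is automatic from $\leq_c$ being a preorder. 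Finally, by strong induction: given $n \geq 3$, write $n = 2k$ if $n$ is even and $n = 2k-1$ if $n$ is odd; in either case $2 \leq k < n$, so $k \in S$ by the induction hypothesis, whence $n \in S$ by the closure property. With the base case $2 \in S$ this yields $S \supseteq \{2,3,4,\dots\}$, which is exactly the statement.

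The only thing that genuinely needs checking is the elementary arithmetic observation that every integer $n \geq 3$ has the form $2k$ or $2k-1$ for some $k$ with $2 \leq k < n$; so there is no real obstacle here. All the substantive content — the uniform enumeration operators realizing the two exponent-increasing steps and their correct behaviour on the reverse orders — has already been carried out in the opening Proposition and the previous Corollary, and the present statement is just the closure of those two steps under composition, started from $\omega^2$.
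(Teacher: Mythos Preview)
Your induction is correct: the arithmetic check that every $n\geq 3$ is either $2k$ or $2k-1$ for some $2\leq k<n$ goes through, and transitivity of $\leq_c$ does the rest.

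The paper, however, does not induct. It gives a direct two-case construction of a single enumeration operator $\Gamma_n$ for each $n$: for $n=2k$ even, $\Gamma_n(\A)=\A^k$ (the Cartesian power, so $(\omega^2)^k=\omega^{2k}$); for $n=3$, $\Gamma_3$ is exactly the operator of Theorem~\ref{th:power-2-to-3}; and for $n=2k+3$ odd with $k\geq 1$, $\Gamma_n(\A)=\Gamma_3(\A)\cdot\A^k$, which has type $\omega^3\cdot\omega^{2k}=\omega^{2k+3}$ (and dually on the reverse). Thus the paper uses only the squaring Proposition and Theorem~\ref{th:power-2-to-3} as building blocks, combined via ordinal product rather than composition of embeddings; it never invokes the previous Corollary (the $k\mapsto 2k-1$ step) at all. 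Your route instead uses \emph{both} the $k\mapsto 2k$ and $k\mapsto 2k-1$ results and chains them via transitivity. What the paper's approach buys is an explicit one-shot operator for each $n$ rather than an iterated composition whose depth grows with $n$; what your approach buys is that it avoids introducing the extra product construction $\Gamma_3(\A)\cdot\A^k$ and simply closes the already-proved exponent-increasing steps under composition.
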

\begin{proof}
  For any natural number $n \geq 2$, we briefly describe the enumeration operator
  $\Gamma_n : \{\omega^2,{(\omega^2)}^\star\} \leq_c \{\omega^n, {(\omega^n)}^\star\}$.
  \begin{itemize}
  \item 
    If $n = 2k$, where $k \geq 1$, then for any input $\A$, $\Gamma_n$ outputs $\A^k$.
  \item
    If $n = 3$, then $\Gamma_3$ is the enumeration operator from Theorem~\ref{th:power-2-to-3}.
  \item
    If $n = 2k+3$, where $k \geq 1$, then for any input $\A$, $\Gamma_n$ outputs $\Gamma_3(\A) \cdot \A^k$.
  \end{itemize}
\end{proof}

\begin{proposition}\label{prop:power-omega}
  $\{\omega,\omega^\star\} \leq_c \{\omega^\omega, {(\omega^\omega)}^\star\}$.
\end{proposition}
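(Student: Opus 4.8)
The plan is to build a single enumeration operator $\Gamma$ that maps every copy of $\omega$ onto a copy of $\omega^{\omega}$ and every copy of $\omega^{\star}$ onto a copy of ${(\omega^{\omega})}^{\star}$. Since $\omega^{\omega}\not\cong{(\omega^{\omega})}^{\star}$ (one is a well-order, the other is not) and $\omega\not\cong\omega^{\star}$, this immediately gives condition~(2) in the definition of a computable embedding, and condition~(1) will be evident from the construction.

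The operator continues the idea behind Theorem~\ref{th:powers-simple} and Theorem~\ref{th:power-2-to-3}: to each element $a$ of the input linear order $\mathcal{L}$ we attach a ``block'' $\mathcal{C}_a$ which is a copy of the finite lexicographic power $\mathcal{L}^{\texttt{rad}_{\mathcal{L}}(a)}$ (realized by tuples $(a;c_1,\dots,c_m)$ with $c_i\in\mathcal{L}$, ordered with the first coordinate most significant), and we set $\Gamma(\mathcal{L})=\sum_{a\in\mathcal{L}}\mathcal{C}_a$, ordering the blocks by the order of $\mathcal{L}$ on the first coordinate. Two observations make this do the right thing. First, $\texttt{rad}_{\mathcal{L}}(a)$ is \emph{finite} for every $a$ when $\mathcal{L}$ is a copy of $\omega$ or of $\omega^{\star}$, so each block is a genuine finite power, and $\texttt{rad}_{\mathcal{L}}$ is unbounded over $\mathcal{L}$ (the $i$-th element has radius $i+1$). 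Second, $\mathcal{L}^{k}$ has order type $\omega^{k}$ when $\mathcal{L}\cong\omega$ and type ${(\omega^{k})}^{\star}$ when $\mathcal{L}\cong\omega^{\star}$, because reversing each of finitely many coordinates reverses the lexicographic order; thus $\Gamma$ need not ``know'' the orientation of $\mathcal{L}$. Granting this, the order-type computations are routine: for $\mathcal{L}\cong\omega$ with elements $a_{0}<a_{1}<\cdots$ one gets
\[
\Gamma(\mathcal{L})\cong\sum_{i\in\omega}\omega^{i+1}=\omega+\omega^{2}+\omega^{3}+\cdots=\omega^{\omega},
\]
and for $\mathcal{L}\cong\omega^{\star}$ with elements $\cdots<a_{1}<a_{0}$ one gets $\Gamma(\mathcal{L})\cong\cdots+{(\omega^{3})}^{\star}+{(\omega^{2})}^{\star}+{(\omega^{1})}^{\star}={(\omega^{\omega})}^{\star}$.

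The step I expect to be the real work is showing that such a $\Gamma$ is actually realized by an enumeration operator, i.e.\ that the construction is monotone. The difficulty is that $\texttt{rad}_{\mathcal{L}}(a)$ is only approximable from below: at a finite stage we know only $\texttt{rad}_{\delta}(a)=\min\{\texttt{left}_{\delta}(a),\texttt{right}_{\delta}(a)\}$, and this value may still grow, so we must be able to \emph{raise the exponent} of an already-committed block without retracting any previously enumerated order relation. I would handle this by fixing for each $a$ a distinguished element $e_{a}$ of $\mathcal{L}$ (for definiteness $e_{a}=a$) and coding the element of $\mathcal{L}^{k}$ named by a tuple $(c_{1},\dots,c_{m})$ with $m\le k$ by its padding $(\underbrace{e_{a},\dots,e_{a}}_{k-m},c_{1},\dots,c_{m})$ to length $k$; inside $\mathcal{C}_{a}$ we compare two tuples by comparing their paddings to length $\texttt{rad}_{\delta}(a)$ lexicographically, most significant coordinate first. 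When the approximation to $\texttt{rad}_{\mathcal{L}}(a)$ rises from $k$ to $k'$, every previously enumerated tuple is simply re-padded with $k'-k$ further leading copies of $e_{a}$; since this prepends the \emph{same} block to all of them, the lexicographic comparison of any two old tuples is unchanged, no order relation is retracted, and the genuinely new (longer) tuples slot in consistently. It then remains to dispatch the routine points: distinct canonical tuples have distinct paddings (so each $\mathcal{C}_{a}$ is indeed a linear order of the claimed type), the construction is given by a c.e.\ set of axioms, and $\Gamma(\mathcal{L})$ is a linear order for every input. With these in hand the order-type claims above finish the proof.
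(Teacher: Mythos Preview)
Your argument is correct but takes a more elaborate route than the paper. The paper exploits the standing convention that structures have domain contained in $\omega$: for each element $a$ of the input $\A$, it simply replaces $a$ by a copy of $\A^{a}$, interpreting $a$ literally as a natural number. Since this exponent is fixed the moment $a$ appears, there is no monotonicity issue whatsoever---the block attached to $a$ never changes its ``target length,'' and one reads off $\Gamma(\A)\cong\sum_{a\in\A}\omega^{a}=\omega^{\omega}$ (respectively ${(\omega^{\omega})}^{\star}$) directly, using only that $\dom(\A)$ is an unbounded subset of $\omega$. Your construction instead uses $\texttt{rad}_{\mathcal{L}}(a)$ as the exponent, which is intrinsic to the order (it does not depend on how the domain sits inside $\omega$) but is only approximable from below; this is what forces you to invent the padding scheme to grow $\mathcal{L}^{k}$ into $\mathcal{L}^{k'}$ monotonically. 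That scheme does work---prepending the same prefix $e_a^{\,k'-k}$ to both paddings preserves every lex comparison, and taking ``canonical'' to mean ``first entry $\neq e_a$ or empty'' makes the padding map a bijection onto $\mathcal{L}^{\texttt{rad}(a)}$---so each $\mathcal{C}_a$ really has the claimed type and the order-type computation goes through. In short: the paper's trick buys a two-line proof at the cost of using the ambient coding of elements as natural numbers; your approach avoids that dependence but pays for it with the padding machinery.
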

\begin{proof}
  For input structure $\A$, replace each element $a$ in $\A$ by a copy of $\A^a$,
  where in the product we interpret $a$ as a natural number.
  If $\A \cong \omega$, then
  \[\Gamma(\A) \cong \sum_{i\in\omega} \omega^i = \omega^\omega.\]
  The case when $\A \cong \omega^\star$ is similar.
  % For input structure $\A$, replace each element $a$ in $\A$ by a copy of $\A^{\texttt{rad}_\A(a)}$.
  % Then if $\A \cong \omega$, for any $a \in \A$, $\texttt{rad}_\A(a)$ is finite, and thus
  % \[\Gamma(\A) \cong \sum_{i\in\omega} \omega^i = \omega^\omega.\]
  % The case when $\A \cong \omega^\star$ is similar.
\end{proof}

\begin{corollary}
  For any natural number $n$,
  \[\{\omega^{n+1}, {(\omega^{n+1})}^\star\} \leq_c \{ \omega^{\omega+n}, {(\omega^{\omega+n})}^\star\}.\]
\end{corollary}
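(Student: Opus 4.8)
The plan is to check that essentially the \emph{same} enumeration operator $\Gamma$ that proves Proposition~\ref{prop:power-omega} already witnesses this reduction; only its behaviour on the new input class needs to be analysed. Recall that $\Gamma$ replaces each element $a$ of the input structure $\A$ by a copy of $\A^a$ (with $a$ read as a natural number) and arranges these copies in the output in the order of their representatives in $\A$, so that $\Gamma(\A)\cong\sum_{a\in\A,\ \leq_{\A}}\A^a$ on every input. That $\Gamma$ is a well-defined monotone enumeration operator, and that each output block inherits the orientation of $\A$ (since $\A^a$ is formed using the order relations of $\A$ itself), is already contained in Proposition~\ref{prop:power-omega}; so the remaining task is to show that $\Gamma$ sends $\omega^{n+1}$ to $\omega^{\omega+n}$ and $(\omega^{n+1})^\star$ to ${(\omega^{\omega+n})}^\star$.

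For the first case one fixes $n$, lets $\A\cong\omega^{n+1}$, and, using $\omega^{n+1}=\omega\cdot\omega^n$, writes $\A=\sum_{i\in\omega^n}\A_i$, where the $\A_i$ are the classes of the equivalence relation identifying two elements exactly when only finitely many elements lie between them; each $\A_i$ has order type $\omega$. Then $\Gamma(\A)\cong\sum_{i\in\omega^n}\bigl(\sum_{a\in\A_i,\ \leq_{\A}}\A^a\bigr)$, and since $\A^a=(\omega^{n+1})^a=\omega^{(n+1)a}$, each inner sum has the form $\sum_{j\in\omega}\omega^{(n+1)c_{i,j}}$, where $c_{i,0},c_{i,1},\dots$ lists, in the order of $\A_i$, the infinitely many elements of $\A_i\subseteq\omega$. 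Because the running maxima $\max_{j'\le j}(n+1)c_{i,j'}$ are unbounded, the partial sums of this series are cofinal in $\omega^\omega$, so each inner sum equals $\omega^\omega$; hence $\Gamma(\A)\cong\sum_{i\in\omega^n}\omega^\omega=\omega^\omega\cdot\omega^n=\omega^{\omega+n}$. The case $\A\cong(\omega^{n+1})^\star$ then follows by reversal: here $\A^a=\bigl((\omega^{n+1})^a\bigr)^\star=(\omega^{(n+1)a})^\star$, so ${\Gamma(\A)}^\star\cong\sum_{a\in\A,\ \geq_{\A}}\omega^{(n+1)a}$, and since $(\A,\geq_{\A})\cong\omega^{n+1}$ the previous computation gives this sum $\cong\omega^{\omega+n}$, i.e. $\Gamma(\A)\cong{(\omega^{\omega+n})}^\star$. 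As $\omega^{\omega+n}$ and ${(\omega^{\omega+n})}^\star$ are non-isomorphic and the input class has exactly the two isomorphism types $\omega^{n+1}$ and $(\omega^{n+1})^\star$, conditions (1) and (2) of a computable embedding follow immediately.

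No genuine obstacle is expected: the construction is an unaltered reuse of the operator from Proposition~\ref{prop:power-omega}, and what remains is a short exercise in ordinal arithmetic. The two points that must not be glossed over are that each block $\A_i$ of $\omega^{n+1}$ really is infinite --- which is exactly why each inner series reaches $\omega^\omega$ and not some smaller limit ordinal --- and that the output inherits the orientation of the input, so that the reversed input is sent to the reversed output; both are immediate from the shape of $\Gamma$.
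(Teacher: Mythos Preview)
Your proposal is correct and follows exactly the paper's approach: the paper's proof is the single line ``We use the same enumeration operator as in Proposition~\ref{prop:power-omega},'' and you have simply supplied the ordinal-arithmetic verification that the paper omits. The key observations you make---that each $\omega$-block of $\omega^{n+1}$ contains infinitely many domain elements so that the corresponding inner sum $\sum_j \omega^{(n+1)c_{i,j}}$ has supremum $\omega^\omega$, and that summing $\omega^n$ such blocks yields $\omega^{\omega+n}$---are precisely what is needed.
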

\begin{proof}
  We use the same enumeration operator as in Proposition~\ref{prop:power-omega}.
  % Notice that $\omega^{\omega + n} = \omega^\omega \cdot \omega^n$.
\end{proof}

\section{Future work}

We strongly conjecture that by employing the methods of this paper, one can prove that $\{ \omega\cdot 3, \omega^{\star}\cdot 3\} \nleq_c \{\omega^3, (\omega^3)^{\star}\}$. Furthermore, it would be interesting to consider pairs of structures $\{\mathcal{A}, \mathcal{B}\}$ such that $\mathcal{A}$ and $\mathcal{B}$ are \emph{not} linear orders, but still $\{\mathcal{A},\mathcal{B}\} \equiv_{tc} \{ \omega, \omega^{\star}\}$. We note that in this case, $\mathcal{A}$ and $\mathcal{B}$ \emph{cannot} be Boolean algebras (see Proposition~4.6 of~\cite{BFS-arxiv}).

\bibliographystyle{plain}
\bibliography{pairs}

\end{document}